\newtheorem{thm}{Theorem}[section]
\newtheorem{lemma}[thm]{Lemma}
\newtheorem{prop}[thm]{Proposition}
\newtheorem{cor}[thm]{Corollary}
\theoremstyle{remark}
\newtheorem{rem}[thm]{Remark}
\newtheorem{ex}[thm]{Example}
\newtheorem{cex}[thm]{Counterexample}
\theoremstyle{definition}
\newtheorem{defn}[thm]{Definition}
\newtheoremstyle{Claim}{}{}{\itshape}{}{\itshape\bfseries}{:}{ }{#1}
\theoremstyle{Claim}
\newcommand{\R}{\mathbb{R}}
\newcommand{\He}{\mathbb{H}}
\newcommand{\X}{\mathcal{X}}
\newcommand{\iac}{\`i}
\newcommand{\al}{\alpha}
\newcommand{\g}{\gamma}
\newcommand{\Om}{\Omega}
\DeclareMathOperator{\LSC}{LSC}
\DeclareMathOperator{\USC}{USC}
\newcommand{\Sym}{\mathcal{S}}
\theoremstyle{plain}
\def\sideremark#1{\ifvmode\leavevmode\fi\vadjust{
\vbox to0pt{\hbox to 0pt{\hskip\hsize\hskip1em
\vbox{\hsize3cm\tiny\raggedright\pretolerance10000
\noindent #1\hfill}\hss}\vbox to8pt{\vfil}\vss}}}
\begin{document}

\title[Liouville theorems]{Liouville results for fully nonlinear equations modeled on H\"ormander vector fields. I. The Heisenberg group}

\author{Martino Bardi}
\author{Alessandro Goffi} 

\date{\today}
\subjclass[2010]{Primary: 35B53, 35J70, 35J60; Secondary: 49L25, 35H20.}
\keywords{Fully nonlinear equation, degenerate elliptic equation, subelliptic equation, H\"ormander condition, Liouville theorems, Heisenberg 
 group.}
 \thanks{
 The authors are members of the Gruppo Nazionale per l'Analisi Matematica, la Probabilit\`a e le loro Applicazioni (GNAMPA) of the Istituto Nazionale di Alta Matematica (INdAM). The authors were partially supported by the research project ``Nonlinear Partial Differential Equations: Asymptotic Problems and Mean-Field Games" of the Fondazione CaRiPaRo. 
This research was largely carried out while the second-named author was Ph.D. fellow at Gran Sasso Science Institute and the main results are part of his PhD thesis.}
\address{Department of Mathematics ``T. Levi-Civita", University of Padova, Via Trieste 63, 35121 Padova, Italy} \email{bardi@math.unipd.it}
\address{Department of Mathematics ``T. Levi-Civita", University of Padova, Via Trieste 63, 35121 Padova, Italy} \email{alessandro.goffi@math.unipd.it}

\maketitle
\begin{abstract}
This paper studies Liouville properties for viscosity sub- and supersolutions of fully nonlinear degenerate elliptic PDEs, under the main assumption that 
the operator has a family of generalized subunit vector fields that satisfy the H\"ormander condition. A general set of sufficient conditions is given such that all subsolutions bounded above are constant; it includes the existence of a supersolution out of a big ball, that explodes at infinity. Therefore for a large class of operators the problem is reduced to finding such a Lyapunov-like function. This is 
done here for the vector fields that generate the Heisenberg group, giving explicit conditions on the sign and size of the first and zero-th order terms in the equation. 
The optimality of the conditions is shown via several examples. A sequel of this paper applies the methods to other Carnot groups and to Grushin geometries.
\end{abstract}
\tableofcontents

\section{Introduction}
\label{intro}
In this paper we study Liouville properties for viscosity sub- or supersolutions of fully nonlinear degenerate elliptic equations 
\begin{equation}\label{basic}
F(x, u, Du, D^2u)=0 \quad\text{ in }\R^d ,
\end{equation}
where $F : \R^d\times\R\times\R^d\times\mathcal S_d\to \R$ is at least continuous and proper, i.e., nondecreasing in the second entry and non-increasing in the last entry (with respect to the partial order of symmetric matrices). Our main assumption is the existence of a family  
 $\mathcal{X}=(X_1,...,X_m)$ of 
  vector fields satisfying the H\"ormander bracket generating condition and {\em subunit} for $F$ in the following sense: for all $i=1,...,m$
\begin{equation}\label{subunitH}
\sup_{\gamma>0} F(x,0,p,I-\gamma p\otimes p)> 0 \quad \forall p\in \R^{d} \;\text{ such that } \; X_i(x)\cdot p\ne 0 .
\end{equation}
This generalizes the classical definition of subunit vectors for linear operators by Fefferman and Phong \cite{FP} and was introduced in our recent paper \cite{BG}. Typical examples are subelliptic equations of the form 
\begin{equation}\label{1L-}
G(x,u,D
u,(D_{\mathcal{X}}^2u)^*)=0\ ,
\end{equation}
where $(D_{\mathcal{X}}^2 u)_{ij} =X_i (X_j u)$ is the intrinsic (or horizontal) Hessian associated to $\mathcal{X}$,   $Y^*$ is the symmetrized matrix of $Y$, 
and $G$ is proper and strictly decreasing with respect to the last entry. 

\smallskip
Before explaining our results, let us recall some of the many Liouville-type properties  for elliptic equations known  in the literature, the most related to our work.
The classical Liouville theorem for harmonic functions on the whole space states that the only harmonic functions in $\R^d$ bounded from above or below are constants, and it is a consequence of mean-value formulas or, more generally, of the Harnack inequality. Such result actually holds for classical solutions to more general uniformly elliptic equations, provided the zeroth order coefficient has the appropriate sign for the maximum principle, and the equation is homogeneous, see, e.g.,  the monograph \cite{GT}.
For  inhomogeneous equations the property is false, 
for instance $\Delta(|x|^2)=2d$ in $\R^d$. %

The Liouville property 
holds also in the much larger class of merely subharmonic functions 
(i.e., subsolutions of $-\Delta u=0$) bounded from above
if the space dimension is $d=2$,
 by exploiting the behavior of the fundamental solution $\log|x|$ and using the Hadamard Three-Circle Theorem (see, e.g., \cite[Theorem 2.29]{PW}, or 
  Theorem \ref{liolin} below for a different proof). 
  However, this result 
   fails in higher dimensions $d\geq3$: for instance, 
   $u_1(x):=-(1+|x|^2)^{-1/2}$ and $u_2(x):=-(1+|x|^2)^{-1}$ are nonpositive 
    subharmonic functions in $\R^3$ and, respectively, in $\R^d$ with $d\geq4$.  

 For 
 linear degenerate elliptic equations, 
 mean-value properties and Harnack-type inequalities were proved in many cases, 
 typically for vector fields $\X$  that generate a stratified Lie group,  and Liouville theorems for solutions to such equations were proved, e.g., in   \cite{ICDCsub, 
 BLU, KL2, KL1}, see also the references therein. 
On the other hand, one does not expect the Liouville property for sub- or supersolutions to $-\Delta_\X u=0$ when $\X$ 
generates a Carnot group, because the intrinsic dimension of this geometry is larger than 2, which is the maximal one for subharmonic functions.
 In fact, in Section \ref{cex} 
  we give simple explicit examples of bounded, non-constant, classical sub- and supersolutions of the sub-Laplace equation in any Heisenberg group ${\He^d}$.
  
Liouville theorems for nonlinear elliptic equations were first considered by Gidas and Spruck \cite{GS_cpde} for semilinear equations and 
then widely investigated, also in the subelliptic and in the quasilinear settings, see, e.g., { \cite{BCDC, Italosurvey, ICDCsub,  AW, BFP, BMagliaro}} and the references therein. 


For fully nonlinear equations as  \eqref{basic}, in the simpler form $F(x,D^2u)=0$ and uniformly elliptic, it was proved in 
\cite[Section 4.3 Remark 4]{CC} that continuous viscosity solutions either bounded from above or below are constant. We recall that uniform ellipticity with parameters $\Lambda\geq\lambda>0$ can be defined by means of   Pucci's extremal operators (see their definition in Section \ref{puccieqs}) as 
\begin{equation}\label{extremal}
\mathcal{M}^{-}_{\lambda,\Lambda}(M
)\leq F(x,r
,p
,M
) -F(x,r
,p
, 0)\leq \mathcal{M}^{+}_{\lambda,\Lambda}(M
) 
\end{equation}
for all symmetric matrices $M$. The result is a consequence of the Harnack inequality 
and comparison with Pucci's 
 operators.
 Further related results 
 for solutions of PDEs of the form $F(D^2u)=0$ 
 can be found in \cite[Section 1.7]{NV}
 and \cite[Theorem 1.7]{ArmstrongCPAM}, { and in   \cite[Theorem 1.5]{PP} for equations with $F$ depending also on $x$ and $Du$.}
 
 The first results for mere sub- or supersolutions of $F(x,D^2u)=0$ 
  are due to A. Cutr\iac{}  and F. Leoni  \cite{CLeoni}. They proved  that if $u\in C(\R^d)$ is either bounded { above} 
 and satisfying
\begin{equation}\label{M+}
\mathcal{M}^{+}_{\lambda,\Lambda}(D^2u)\leq 0\text{ in }\R^d
\end{equation}
in viscosity sense, or bounded { below} 
 and satisfying 
\begin{equation}\label{M-}
\mathcal{M}^{-}_{\lambda,\Lambda}(D^2u)\geq 0\text{ in }\R^d
\end{equation}
in viscosity sense, then $u$ is constant provided that $d\leq \frac{\Lambda}{\lambda}+1$.
This can be seen as the fully nonlinear analogue of the Liouville theorem for subharmonic functions, since when $\lambda=\Lambda$ one gets the Laplacian (up to constants) and the constraint reads $d\leq 2$. Such conditions are known to be sharp: examples of nontrivial solutions to Pucci's extremal equations when $d>\frac{\Lambda}{\lambda}+1$ can be found in \cite[Remark 2]{CLeoni} and in Section \ref{cex} below.

This result was extended to the Heisenberg group $\He^d$ by Cutr\iac{}  and Tchou \cite[Theorem 5.2]{CTchou} 
for the inequalities \eqref{M+} and  \eqref{M-} with $D^2u$ replaced by $(D^2_{\He^d} u)^*$. Here the condition $d\leq \frac{\Lambda}{\lambda}+1$ is replaced by $Q\leq \frac{\Lambda}{\lambda}+1$, $Q=2d+2$ standing for the homogeneous dimension of $\He^d$. An example of classical subsolution violating 
 the Liouville property when $Q>\frac{\Lambda}{\lambda}+1$ is in Section \ref{cex}. 
This is
consistent with the aforementioned failure of Liouville properties for 
 subharmonic functions 
 in the Heisenberg group. 
 
 In \cite{CLeoni} the authors also prove 
Liouville results for sub- and supersolutions of $F(x,D^2u)+u^p=0$ with $F$ uniformly elliptic, $F(x,0)=0$ and $p$ in a suitable range.
This was recently extended to  Carnot groups of Heisenberg type in \cite{Goffi}. See also \cite{ArmstrongSirakovSharp} and \cite{LeoniJMPA} for related results. 

Liouville properties for PDEs involving gradient terms  of the form 
$$F(x,D^2u)+g(|x|)|Du|+h(x)u^p=0 ,
$$
were first investigated by Capuzzo Dolcetta and  Cutr\iac{} \cite{CDC2003}. 
They assume that $g$ is bounded and such that 
\[
-\frac{\Lambda(d-1)}{|x|}\leq g(|x|)\leq \frac{\lambda-\Lambda(d-1)}{|x|}
\]
for $|x|$ large, and use suitable extensions of the Hadamard three-sphere theorem. 
Note that this is a smallness condition at infinity  on the first order terms of the PDE.
See also \cite{Goffi} for similar recent results on the Heisenberg group. 
Related papers for fully nonlinear PDEs with gradient dependence
are \cite{Rossi}, \cite{PP} and \cite{CFelmer}. 

A new approach to Liouville properties for sub- and supersolutions of 
Hamilton-Jacobi-Bellman elliptic equations involving operators of Ornstein-Uhlenbeck type was introduced 
 in \cite{BCManca}, based on the strong maximum principle and the existence of a sort of Lyapunov function for the equation. 
 It was applied in \cite{BC} to fully nonlinear uniformly elliptic equations of the form \eqref{basic} and to some quasilinear hypoelliptic equations, under 
 assumptions on the sign of the coefficients of the first and zero-th order terms, and on their size. Here these terms must be large for large $|x|$, contrary to the results quoted above.  In the case of Pucci's operators the results of \cite{BC} are different from those in \cite{CLeoni} and fit better the treatment of uniformly elliptic equations via the inequalities \eqref{extremal}.
 In Section \ref{cex} we give examples showing their optimality. 
  The paper \cite{MMT} treats a linear equation on the Heisenberg group in the same spirit.

\smallskip

In the present paper we study Liouville properties for viscosity sub- and supersolutions of equations of the form \eqref{basic} under the condition \eqref{subunitH} for a H\"ormander family $\mathcal{X}$. 
Our main motivation are equations of the form \eqref{1L-}, uniformly subelliptic in the sense that $G$ satisfies the inequalities \eqref{extremal} with Pucci operators $\mathcal{M}^{\pm}_{\lambda,\Lambda}$ acting on $m$-dimensional instead of $d$-dimensional symmetric matrices.
In the first part we prove a general result under two additional assumptions, that for subsolutions are:
a 
 subadditivity condition (\eqref{inf} 
  of Section \ref{abs-res}) which for linear equations corresponds to the homogeneity, and the existence of a Lyapunov-like function $w$ such that $\lim_{|x|\rightarrow\infty}w(x)=+\infty$ and 
supersolution of \eqref{basic}  out of a large ball (symmetric assumptions are made for supersolutions). 
Here we adapt the approach of \cite{BC} to degenerate equations 
by means of the new strong maximum and minimum principles 
obtained by the authors in the recent paper \cite{BG} using 
the generalized subunit vectors  
for fully nonlinear equations \eqref{subunitH}. 

In the second part of the paper we find more explicit sufficient conditions for the Liouville properties in the case of the Heisenberg group $\He^d$ by 
taking  $w=\log \rho$ as  Lyapunov function, 
where   $\rho$ is a norm 1-homogeneous with respect to the dilations of the group $\He^d$. 
As must be expected from the results quoted before, these assumptions 
 concern the sign and the strength of either the first or the zero-th order terms in the equation (or both). { They are related to recurrence conditions in the probabilistic literature, 
  and are a form of dissipativity (cfr., e.g., \cite{PZ}).}
An example of our results, for the uniformly subelliptic equation
\begin{equation}\label{pde_he}
G(x,u,D_{{\mathbb{H}^d}}u,(D_{\mathbb{H}^d}^2u)^*)=0 ,  \quad \text{ in }\R^{2d+1} ,
\end{equation}
where $D_{{\mathbb{H}^d}}u$ and $D_{\mathbb{H}^d}^2u$ are the horizontal gradient and Hessian in $\He^d$, is the following: if 
\[
G(x,r,p,X)\geq \mathcal{M}^-_{\lambda,\Lambda}(X)+\inf_{\alpha\in A}\{c^\alpha(x)r-b^\alpha(x)\cdot p\}
\]
with $c^\alpha\geq 0$, we prove the Liouville property for subsolutions under the condition
\[
\sup_{\alpha\in A}\{b^\alpha(x)\cdot \frac{\eta}{|x_H|^2}-c^\alpha(x)\frac{\rho^4}{|x_H|^2}\log\rho\}\leq \lambda-\Lambda(Q-1) \quad\text{ for } |x|\geq R ,
\]
where $x_H:=(x_1,...,x_{2d})\neq 0_{\R^{2d}}$, $\eta\in\R^{2d}$ 
is defined by
$\eta_i=x_i|x_H|^2+x_{i+d}x_{2d+1}$,
$\eta_{i+d}=x_i|x_H|^2-x_{i}x_{2d+1}$, 
for $i=1,...,d$, 
 and $Q=2d+2$ is the homogeneous dimension of $\He^d$. This condition is satisfied if either $c^\alpha > 0$ or $b^\alpha(x)\cdot {\eta} < 0$ for $x$ large, and under suitable growth conditions at infinity of the data. 
In Section \ref{cex} we use again the  norm $\rho$ to discuss the sharpness of 
 this condition. 

In our companion paper \cite{BG_lio2} we apply the general results of Section \ref{general} to other classical families of H\"ormander vector fields, 
namely, the generators of free step 2 Carnot groups, 
 and Grushin-type fields, whose associated geometry has not a group structure.


It is well known that 
 Liouville properties have many applications to various issues. 
We are motivated in particular by their consequences in ergodic problems, large time stabilization in parabolic equations, and singular perturbation problems, as in, e.g., \cite{BC, BCManca, MMT, MMTesaim}.  
For other forms of Liouville-type theorems for different equations let us also mention \cite{Biri} in the Heisenberg group, the recent paper \cite{Li} for PDEs arising in conformal geometry, and \cite{BGL} for versions of Pucci's extremal equations with 
different 
 degeneracies than in our work. 


\par\smallskip
The paper is organised as follows. In section \ref{glimpse} we explain the approach to Liouville properties based on Lyapunov functions and strong maximum principles in the simple case of classical subsolutions of linear equations, for the reader's convenience, and discuss some related literature. Section \ref{general} presents an abstract result and its various applications to nonlinear equations with general H\"ormander vector fields.
In Section \ref{heisenberg} we study PDEs involving the generators of the Heisenberg group $\He^d$, in the form \eqref{pde_he} where only the horizontal gradient appears, as well as in the form \eqref{1L-} involving the Euclidean gradient. Section \ref{cex} makes a detailed comparison with the literature, in the cases of $\R^d$ and $\He^d$, and discusses by means of explicit examples the optimality of the sufficient conditions for Liouville properties.

\section{A glimpse on the method of proof for 
 linear equations}\label{glimpse} 
  Before showing our main results, we 
  present the proof of a 
   Liouville-type theorem for classical $C^2$ subsolutions to linear uniformly elliptic equations in the Euclidean framework, which serves as a guideline for our proof in the nonlinear and subelliptic setting. 
   It uses only classical arguments such as strong maximum and comparison principles, { but not Harnack inequalities.}
\begin{thm}\label{liolin}
Assume 
 the operator $Lu:=-\mathrm{Tr}(a
(x)D^2u)+b(x)\cdot Du{ +c(x)u}$ is uniformly elliptic, with $a:\R^d\to\mathcal{S}_d$, $b:\R^d\to\R^d$, { $c : \R^d\to [0,+\infty)$ locally bounded}. 
Suppose 
 there exists 
$w\in C^2(\R^d\backslash\{0\})$ such that, for some $R>0$,
\begin{itemize}
\item[(i)] $Lw\geq0$ for $|x|>R$
\item[(ii)] $\lim_{|x|\to+\infty}w=+\infty$.
\end{itemize}
Let also $u\in C^2(\R^d)$ be such that $Lu\leq 0$ 
and ${ 0\leq u}(x)\leq C$ in $\R^d$. Then $u$ is constant. 
\end{thm}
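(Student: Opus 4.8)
The plan is to reduce the statement to the strong maximum principle by using the Lyapunov function $w$ as a barrier that confines $u$: the goal is to show that the supremum $M:=\sup_{\R^d}u$ is actually attained at some point of the closed ball $\overline{B_R}$, after which constancy is classical. Since $u\ge 0$ we have $M\ge 0$; once $M$ is attained at an (interior) point $x_0$, the strong maximum principle for uniformly elliptic operators with $c\ge 0$, applicable precisely because the maximum value $M$ is nonnegative, forces $u\equiv M$ when $M>0$, while $M=0$ trivially gives $u\equiv 0$ since $0\le u\le M$. So the whole difficulty is concentrated in proving that $M$ is attained.

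First I would normalise $w$. Because $c\ge 0$, for every constant $K\ge 0$ one has $L(w+K)=Lw+cK\ge Lw\ge 0$ on $\{|x|>R\}$, so adding a nonnegative constant preserves (i). Since $w$ is continuous on the closed set $\{|x|\ge R\}$, which excludes the origin as $R>0$, and tends to $+\infty$, it is bounded below there; choosing $K$ large enough I may therefore assume $w\ge 0$ on $\{|x|\ge R\}$ while keeping (i) and (ii). This innocuous-looking step is exactly where the sign condition $c\ge 0$ enters, and it is the reason one cannot, in general, translate $w$ freely.

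Next comes the barrier argument. Set $m:=\max_{\partial B_R}u$, so that $0\le m$ because $u\ge 0$. For $\eps>0$ and $\rho>R$ consider $\psi_\eps:=m+\eps w$ on the annulus $\{R<|x|<\rho\}$. A one-line computation gives $L\psi_\eps=cm+\eps\,Lw\ge 0$, using $m\ge 0$, $c\ge 0$ and (i); hence $\psi_\eps$ is a supersolution there. On $\partial B_R$ one has $\psi_\eps\ge m\ge u$ since $w\ge 0$, and on $\partial B_\rho$ one has $\psi_\eps\ge m+\eps\inf_{|x|=\rho}w\ge C\ge u$ once $\rho$ is large, because $w\to+\infty$. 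The comparison principle (weak maximum principle) for $L$ with $c\ge 0$ then yields $u\le\psi_\eps=m+\eps w$ throughout the annulus. Fixing a point $x$ with $|x|>R$, letting $\rho\to+\infty$ and then $\eps\to 0^+$, I obtain $u(x)\le m$. Thus $u\le m$ on all of $\{|x|\ge R\}$, so $\sup_{\R^d}u=\max_{\overline{B_R}}u$, a maximum over a compact set, which is therefore attained, completing the reduction.

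I expect the main obstacle to be the presence of the zeroth-order term $c$: it obstructs the naive translation of the Lyapunov function and is the true reason the hypothesis $u\ge 0$ (rather than merely $u\le C$) is imposed, both to guarantee $cm\ge 0$ in the barrier and to legitimise the sign requirement in the strong maximum principle. Everything else is a routine combination of comparison on annuli with the double limit $\rho\to\infty$, $\eps\to 0$.
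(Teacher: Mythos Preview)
Your proof is correct and follows essentially the same strategy as the paper: use $w$ as a Lyapunov barrier, apply the weak maximum principle on annuli with a small parameter multiplying $w$, pass to the limit to trap the supremum of $u$ inside a fixed ball, and conclude by the strong maximum principle. The only cosmetic difference is that you compare $u$ from above with the supersolution $m+\varepsilon w$ (after normalising $w\ge 0$), whereas the paper works with the subsolution $u-\zeta w$ directly and thereby avoids the normalisation step; both routes are equivalent for this linear operator.
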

\begin{rem}\label{lapld=2}
This result is essentially  a special case, e.g., of \cite[Theorem 2.1]{BC} and applies 
to the case of the Laplacian (i.e. $a_{ij}=\delta_{ij}$ and $b=0$) when $d\leq 2$; therefore it 
 gives a different proof of the Liouville theorem for subsolutions, see e.g. \cite[Theorem 2.29]{PW}. Indeed, the function $w:=\log|x|$ fulfills the above assumptions, giving thus that every  subharmonic function bounded from above is constant. However, as pointed out in the introduction, this is not the case when $d\geq3$, where 
  $w$ is no longer a classical supersolution of $-\Delta u=0$. { A more general result in the context of Riemannian manifolds 
  can be found in \cite[Corollary 7.7]{Gry1} under the same sufficient conditions (i)-(ii).} It applies to subsolutions of $-\Delta u+b(x)\cdot Du=0$ in any space dimension $d$ under assumptions on the drift $b$ implying the existence of a Lyapunov-like function $w$
  (cfr. \cite{BC}).
\end{rem}
\begin{proof}
For $\zeta>0$ set 
\[
v_\zeta(x):=u(x)-\zeta w(x)\text{ for }|x|\geq\bar{R}
\]
for some $\bar{R}>R>0$. Clearly, $v_\zeta\in C^2(\Omega_{\bar{R}})$, where 
 $\Omega_{\bar{R}}:=\{x\in\R^d:|x|\geq\bar{R}\}$, and 
\[
\text{ 
$Lv_\zeta=Lu-\zeta Lw\leq 0$ for every $x$ such that $|x|>\bar{R}$}\ .
\]
Define $C_\zeta:=\max_{\{|x|=\bar{R}\}}v_\zeta(x)$. Since
\[
\lim_{|x|\to+\infty}v_\zeta(x)=-\infty\ ,
\]
there exists $K_\zeta>\bar{R}$ such that $v_\zeta<C_\zeta$ for every $x$ such that $|x|\geq K_\zeta$. By the weak maximum principle (see \cite[Corollary 3.2]{GT}) 
 on the set $\{x\in\R^d:\bar{R}<|x|<K_\zeta\}$ we have
\[
\max_{\{x\in\R^d:\bar{R}<|x|<K_\zeta\}}v_\zeta(x)=\max_{\{x\in\R^d:|x|=\bar{R}\text{ or }|x|=K_\zeta\}}v_\zeta(x) = C_\zeta .
\]
Since $v_\zeta(x)<C_\zeta$ for every $x$ such that $|x|\geq K_\zeta$, we { get, for all $|y|\geq\bar R$},
\[
v_\zeta(y) 
\leq C_\zeta 
\leq  \max_{\{x\in\R^d:|x|=\bar{R}\}}u-\zeta \min_{\{x\in\R^d:|x|=\bar{R}\}}w\ .
\]
On one hand, letting $\zeta\to0$ we conclude
\[
u(y)\leq  \max_{\{x\in\R^d:|x|=\bar{R}\}}u , \quad \text{ for all $|y|\geq\bar{R}$}\ .
\]
On the other hand, owing to the weak maximum principle in the set $B(0,\bar{R})$ we obtain
\[
u(y)\leq  \max_{\{x\in\R^d:|x|=\bar{R}\}}u  , \quad\text{ for all $|y|<\bar{R}$}\ .
\]
Combining the above inequalities one concludes
\[
u(y)\leq \max_{\{x\in\R^d:|x|=\bar{R}\}}u, \quad \text{ for all }y\in\R^d\ .
\]
Hence, $u$ attains its { nonnegative} maximum 
at some point of  $\partial B(0,\bar{R})$ and then the conclusion follows by the strong maximum principle for classical linear uniformly elliptic equations \cite[Theorem 3.5]{GT}.
\end{proof}
\begin{rem}
The same result remains true if $L$ is replaced by a degenerate elliptic operator $L_\X u:=-\sum_{i,j}X_iX_ju+b(x)\cdot D_\X u{ +c(x)u}$, provided the vector fields  $\X$ satisfy the H\"ormander condition and $b:\R^m\to\R^n$, $n\leq m$ is { smooth}, 
 the proof being exactly the same thanks to Bony strong maximum principle for subelliptic equations. 
An example of such result is  \cite[Proposition 3.1]{MMT}. 

Note also that the assumption $u\leq C$ can be replaced by $\limsup_{|x|\to\infty}u(x)/w(x)\leq 0$, whereas the sign condition $u\geq 0$ can be dropped if $c\equiv 0$.
 
We further remark that, when $b\equiv0$, 
$L$ reduces to a Schr\"odinger-type operator. When $u$ is a solution of the equation $-\Delta u+c u=0$, the Liouville property is proved in \cite[Corollary 13.7]{Gry1} under the same sufficient conditions (i)-(ii) on Riemannian manifolds, and it is connected to recurrence and non-explosive properties of Brownian motions on Riemannian manifolds, see \cite[Theorem 5.1 and Section 13.2]{Gry1}. Our result is more general in that it allows $u$ to be merely a subsolution to the equation. 
 We also refer to \cite{PZ} for a control theoretic interpretation of the Liouville property { for Ornstein-Uhlenbeck operators}.
\end{rem}

\section{The general case}\label{general}
\subsection{An abstract result} \label{abs-res}

In this Section we consider a general equation of the form
\begin{equation}\label{fully2}
F(x,u,Du,D^2u)=0 \quad \text{ in }\R^d .
\end{equation}
We will denote $F[u]:=F(x,u,Du,D^2u)$ and make the following assumptions
\begin{itemize}
  \item[(i)] { $F : \R^d\times\R\times\R^d\times\mathcal S_d\to \R$} is continuous, proper, satisfies
  \begin{equation}\label{inf}\tag{S1}
 F[\varphi-\psi]\leq F[\varphi]-F[\psi]\text{ for all }\varphi,\psi\in C^2(\R^d)
  \end{equation}
  and $F(x,r,0,0)\geq0$ for every $x\in\Omega$ and $r\geq0$.
  
  \item[(ii)] $F$ satisfies the comparison principle in any bounded open set $\Om$, namely, if $u$ and $v$ are, respectively, a viscosity sub- 
  and  supersolution of \eqref{fully2} such that $u\leq v$ on $\partial\Om$, then $u\leq v$ in $\Om$.  
\item[(iii)] There exists $R_o\geq 0$ and $w\in\LSC(\R^d)$ viscosity supersolution of \eqref{fully2} for $|x|>R_o$ and  satisfying $\lim_{|x|\rightarrow\infty}w(x)=+\infty$.
  \item[(iv)] $F$ satisfies the strong maximum principle, namely, any viscosity subsolution of \eqref{fully2} that attains an 
   non-negative maximum must be constant.

\end{itemize}
To prove the analogous results for viscosity supersolutions we need to replace (i) and (iii)-(iv) above by
\begin{itemize}
  \item[(i')] $F$ is continuous, proper, satisfies
   \begin{equation}\label{sup}\tag{S2}
 F[\varphi-\psi]\geq F[\varphi]-F[\psi]\text{ for all }\varphi,\psi\in C^2(\R^d)
  \end{equation}
  and $F(x,r,0,0)\leq0$ for every $x\in\Omega$ and $r\leq0$.
 
  \item[(iii')] There exists $R_o\geq0$ and $W\in\USC(\R^d)$ viscosity subsolution to \eqref{fully2} for $|x|>R_o$ and satisfying $\lim_{|x|\rightarrow\infty}W(x)=-\infty$. 
    \item[(iv')] $F$ satisfies the strong minimum principle.
\end{itemize}
The next result extends the proof 
of Theorem \ref{liolin} for linear equations to the fully nonlinear degenerate setting. Its proof is essentially the same as the one done  in \cite{BC} for HJB 
 equations,
 so we only outline it for the reader's convenience.
\begin{prop}\label{liogen1}
Assume {\upshape (i)-(iv)}. Let $u\in\USC(\R^d)$ be a viscosity subsolution to \eqref{fully2} satisfying 
\begin{equation}\label{limsup}
\limsup_{|x|\rightarrow\infty}\frac{u(x)}{w(x)}\leq0\ .
\end{equation}
for $w$ as in {\upshape(iii)}. If $u\geq0$, then $u$ is constant.
\end{prop}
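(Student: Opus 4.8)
The plan is to transcribe the proof of Theorem \ref{liolin} into the viscosity setting, replacing the classical maximum and comparison principles by the hypotheses (ii) and (iv), and replacing the linearity of $L$ by the subadditivity \eqref{inf}. Fix $\bar R > R_o$ and, for $\zeta>0$, set $v_\zeta := u - \zeta w$ on $\Omega_{\bar R} := \{x\in\R^d : |x|\ge \bar R\}$. The whole argument hinges on one technical fact: that $v_\zeta$ is a viscosity subsolution of \eqref{fully2} in the open annular region $\{|x|>\bar R\}$.

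I expect this subsolution property to be the main obstacle, since $v_\zeta$ is a difference of the \USC{} subsolution $u$ and the \LSC{} supersolution $\zeta w$, and such differences are not handled by elementary manipulations. I would establish it by doubling of variables: given $\phi\in C^2$ touching $v_\zeta$ from above at an interior point $x_0$, I penalize $u(x)-\zeta w(y)-\phi(x)-|x-y|^2/(2\eps)$, select a maximizer $(x_\eps,y_\eps)\to(x_0,x_0)$, and invoke the Crandall–Ishii theorem on sums to produce a common gradient vector and matrices $X\le Y$ so that the subsolution inequality for $u$ at $x_\eps$ and the supersolution inequality for $\zeta w$ at $y_\eps$ hold simultaneously. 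Feeding these two inequalities into \eqref{inf} at the limiting common point, and using that $F$ is proper (so that replacing the Hessian slot $D^2\phi(x_0)$ by the smaller $D^2\phi(x_0)+(X-Y)$ only increases $F$), yields $F(x_0,v_\zeta(x_0),D\phi(x_0),D^2\phi(x_0))\le 0$ as $\eps\to0$. The delicate points here are the bookkeeping of the penalization jets in the limit and, crucially, the fact that $\zeta w$ inherits the supersolution property of $w$; the latter is immediate for the positively one-homogeneous operators motivating the result, and is where the structural assumption \eqref{inf} together with properness must be used in the general case.

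Once $v_\zeta$ is known to be a subsolution, the remainder follows the linear template closely. From \eqref{limsup} and $\lim_{|x|\to\infty}w=+\infty$ one gets, writing $v_\zeta = w\,(u/w-\zeta)$ where $w>0$ for $|x|$ large, that $v_\zeta(x)\to-\infty$. Hence, setting $C_\zeta:=\max\{0,\max_{|x|=\bar R} v_\zeta\}\ge 0$, there is $K_\zeta>\bar R$ with $v_\zeta<C_\zeta$ on $\{|x|\ge K_\zeta\}$. On the annulus $\{\bar R<|x|<K_\zeta\}$ the constant $C_\zeta$ is a supersolution, precisely because $F(x,C_\zeta,0,0)\ge 0$ by the sign hypothesis in (i), and $v_\zeta\le C_\zeta$ on both bounding spheres; the comparison principle (ii) then gives $v_\zeta\le C_\zeta$ on all of $\Omega_{\bar R}$. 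Writing this as $u(y)\le C_\zeta+\zeta w(y)$ and letting $\zeta\to0$ yields $u(y)\le \max_{|x|=\bar R}u =: M_0$ for every $|y|\ge\bar R$.

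Finally, comparing the subsolution $u$ with the nonnegative constant supersolution $M_0$ on the ball $B(0,\bar R)$ via (ii) (again legitimate since $F(x,M_0,0,0)\ge 0$) extends the bound to $u(y)\le M_0$ for all $y\in\R^d$. Therefore $u$ attains its global, nonnegative maximum $M_0$ at some point of $\partial B(0,\bar R)$, and the strong maximum principle (iv) forces $u$ to be constant.
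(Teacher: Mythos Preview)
Your overall architecture matches the paper's: perturb by $-\zeta w$, show the perturbation is a viscosity subsolution outside a large ball, compare with a nonnegative constant on an annulus, send $\zeta\to 0$, and finish with the strong maximum principle. The difference lies in how the key technical step is carried out.

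To prove that $u_\zeta:=u-\zeta w$ is a subsolution in $\{|x|>R_o\}$, the paper does \emph{not} double variables or invoke the Theorem on Sums. It argues by contradiction using only hypothesis (ii): if a smooth $\varphi$ touches $u_\zeta$ strictly from above at $\bar x$ with $F[\varphi](\bar x)>0$, then by continuity $F[\varphi-\delta]>0$ on some ball $B(\bar x,r)\subset\{|x|>R_o\}$; choosing $0<k<\delta$ so that $u_\zeta\le\varphi-k$ on $\partial B(\bar x,r)$, one checks---via the supersolution property of $w$, properness, and \eqref{inf}---that $\zeta w+\varphi-k$ is a viscosity \emph{supersolution} of $F=0$ in $B(\bar x,r)$. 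Then $u\le \zeta w+\varphi-k$ on $\partial B(\bar x,r)$ and the comparison principle (ii) force the same inequality inside, contradicting $u(\bar x)=\zeta w(\bar x)+\varphi(\bar x)$.

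This route is more self-contained than yours: \eqref{inf} is applied exactly as stated (to smooth test functions at a single point), and no structure condition on $F$ in $x$ is needed to merge the two points $x_\eps,y_\eps$ of a penalization. Your Crandall--Ishii sketch would additionally require a pointwise jet version of \eqref{inf} and some modulus of continuity of $F$ to pass to the limit. You are right, however, that both arguments tacitly use that $\zeta w$ (for $0<\zeta\le 1$) inherits the supersolution property from $w$: the paper's chain of inequalities begins with $0\le F[\psi-\varphi+k]$, which is exactly the supersolution test for $\zeta w$. A cosmetic point: the paper takes $C_\zeta:=\max_{|x|\le R_o}u_\zeta$ over the closed ball, which absorbs your separate comparison on $B(0,\bar R)$ into one step.
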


\begin{proof} 
Define $u_{\zeta}(x):=u(x)-\zeta w(x)$ for $\zeta>0$. Possibly increasing $R_o$, we can assume that $u$ is not constant in $\overline{B}(0,R_o):=\{x\in\R^m:|x|\leq R_0\}$, otherwise we are done. Set
\begin{equation}
\label{Ceta}
C_{\zeta}:=\max_{|x|\leq R_o}u_{\zeta}(x)\ .
\end{equation}
Note that $F[C_{\zeta}]\geq0$ and $C_{\zeta}>0$ for all $\zeta$ sufficiently small. 
In fact, if $C_{\zeta}=0$, 
by letting $\zeta\rightarrow0$ we get $u(x)=0$ for every $x$ with $|x|\leq R_o$, a contradiction with 
$u$ not constant in $\overline{B}(0,R_o)$.

 The growth condition \eqref{limsup} implies
\begin{equation*}
\limsup_{|x|\rightarrow\infty}\frac{u_{\zeta}(x)}{w(x)}\leq-\zeta<0 \quad \forall \zeta>0\ .
\end{equation*}
As a consequence, we have 
\begin{equation}\label{limsupueta}
\lim_{|x|\rightarrow+\infty}u_{\zeta}(x)=-\infty\ .
\end{equation}
Then, for all $\zeta>0$ there exists $R_{\zeta}>R_o$ such that
\begin{equation}\label{ineqC}
u_{\zeta}(x)\leq C_{\zeta}\ \quad\text{for all }|x|\geq R_{\zeta}\ .
\end{equation}

The main step is proving that $u_{\zeta}$ is a viscosity subsolution of $F[u]=0$ in $\{x\in\R^d:|x|>R_o\}$. 
Take $\bar{x}$ and $\varphi$ smooth such that $0=(u_{\zeta}-\varphi)(\bar{x})>(u_{\zeta}-\varphi)({x})$ for all $x$. 
Assume by contradiction that $F[\varphi(\bar{x})]>0$. 
Then for some $\delta>0$ and $0<r<|\bar{x}|-R_o$ 
\begin{equation}
\label{F>0}
F[\varphi-\delta]>0  \quad \text{ in }B(\bar{x},r) .
\end{equation}
Next take $0<k<\delta$ such that $u_{\zeta}-\varphi\leq-k<0$ on $\partial B(\bar{x},r)$. 
We claim that $\zeta w+\varphi-k$  satisfies $F[\zeta w+\varphi-k]\geq0$ in $B(\bar{x},r)$. Indeed, take $\tilde{x}\in B(\bar{x},r)$ and $\psi$ smooth such that
$\zeta w +\varphi-k-\psi$ has a minimum at $\tilde{x}$.
Using that $w$ is a viscosity supersolution to \eqref{fully2}, 
$F$ proper, \eqref{inf}, and  \eqref{F>0} we get
\[
0\leq F[\psi(\tilde{x})-\varphi(\tilde{x})+k]\leq F[\psi(\tilde{x})-\varphi(\tilde{x})+\delta]\leq F[\psi(\tilde{x})]-F[\varphi(\tilde{x})-\delta]<F[\psi(\tilde{x})] .
\]
Then $\zeta w+\varphi-k$ is a 
supersolution to $F[u]=0$ in $B(\bar{x},r)$ and $u\leq\zeta w+\varphi-k$ on $\partial B(\bar{x},r)$, 
so  the comparison principle gives 
$u\leq \zeta w+\varphi-k$ in $B(\bar{x},r)$,
in contradiction with the fact that $u(\bar{x})=\zeta w(\bar{x})+\varphi(\bar{x})$.

Now we can use the comparison principle in $\Om=\{x:R_o<|x|<R_{\zeta}\}$ and \eqref{ineqC} to get 
$u_{\zeta}\leq C_{\zeta}$ in $\Om$. 
 Therefore we have
\begin{equation*}
u_{\zeta}(x)\leq C_{\zeta}\  \quad \text{for all }|x|\geq R_o .
\end{equation*}
By letting $\zeta\rightarrow0^+$ we obtain 
\begin{equation*}
u(x)\leq\max_{|y|\leq R_o}u(y)  \quad \text{for all } x\in \R^d ,
\end{equation*}
and hence $u$ attains its maximum $\bar{x}$ over $\R^d$. Since 
 $u\geq0$ the SMP gives 
  the desired conclusion.
\end{proof}
\begin{rem}
Note that if $u$ is bounded above, then \eqref{limsup} is satisfied. 
\end{rem}
The next result says 
 that the assumption $u\geq0$ can be dropped provided $r\mapsto F(x,r,p,X)$ 
is constant: this will be the case for some HJB operators we discuss in the next sections.
\begin{cor}\label{corlio1}
Assume {\upshape (i)-(iv)}. Let $u\in\USC(\R^d)$ be a viscosity subsolution to \eqref{fully2} satisfying \eqref{limsup}
for $w$ as in {\upshape(iii)}. Assume  $r\mapsto F(x,r,p,X)$ is constant for all $x,p, X$ and $F(x,r,0,0)=0$ for every $x\in\Omega$. 
Then $u$ is constant.
\end{cor}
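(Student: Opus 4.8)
The plan is to reduce Corollary \ref{corlio1} to Proposition \ref{liogen1} by a vertical translation of $u$, exploiting the hypothesis that $F$ is independent of the zero-th order variable $r$. The key observation is that if $r\mapsto F(x,r,p,X)$ is constant, then adding a constant to a subsolution produces another subsolution: for any $c\in\R$ and any smooth test function $\varphi$ touching $u+c$ from above at $\bar x$, the function $\varphi-c$ touches $u$ from above at $\bar x$, and since $F(\bar x,\varphi-c,D\varphi,D^2\varphi)=F(\bar x,\varphi,D\varphi,D^2\varphi)$ by the $r$-independence, the subsolution inequality is preserved. Thus $u+c$ is a viscosity subsolution of \eqref{fully2} for every constant $c$.

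First I would note that the growth condition \eqref{limsup} is insensitive to adding a constant, since $w\to+\infty$ and hence $(u(x)+c)/w(x)$ has the same $\limsup$ as $u(x)/w(x)$ as $|x|\to\infty$. Therefore, if I can arrange that the translated function $u+c$ is non-negative, I may apply Proposition \ref{liogen1} to it and conclude that $u+c$ is constant, whence $u$ is constant. The obstacle is that $u$ need not be bounded below, so no finite constant $c$ will make $u+c\geq0$ everywhere; I cannot directly force non-negativity by a single translation.

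To circumvent this, I would run the argument on the relevant sublevel behaviour rather than insisting on global non-negativity. The cleanest route is to re-examine the proof of Proposition \ref{liogen1} and observe that the sign condition $u\geq0$ was used only at the very last step, to invoke the strong maximum principle at a \emph{non-negative} maximum (recall assumption (iv) requires the SMP at a non-negative maximum). In the present setting the extra hypothesis $F(x,r,0,0)=0$ for all $x\in\Omega$ means that constants are simultaneously sub- and supersolutions, so the SMP can be applied at a maximum of \emph{any} height: if $u$ attains its maximum value $\mu$ at an interior point $\bar x$, translate by $c=-\mu$ so that $u-\mu$ attains the non-negative (indeed zero) maximum at $\bar x$; since $u-\mu$ is still a subsolution by the translation invariance above, the SMP from (iv) applies to $u-\mu$ and forces it, hence $u$, to be constant.

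Therefore the main steps are: (a) establish translation invariance of subsolutions under the $r$-independence hypothesis; (b) rerun the comparison-and-Lyapunov argument of Proposition \ref{liogen1} verbatim, which is valid since it nowhere used the sign of $u$, to deduce that $u$ attains its maximum over $\R^d$ at some point $\bar x$; and (c) translate $u$ down by its maximal value and apply the strong maximum principle to the non-negative maximum of the (still subsolution) translate. I expect step (c) to be the only delicate point, since one must check that subtracting the constant $\mu$ genuinely yields a subsolution to which assumption (iv) applies; this is exactly where $F(x,r,0,0)=0$, rather than merely $\geq0$, is needed, so that the constant $\mu$ itself is a solution and the SMP is not obstructed by a sign mismatch at the maximum.
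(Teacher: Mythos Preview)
Your proposal is correct and follows essentially the same route as the paper: rerun the Lyapunov/comparison argument of Proposition~\ref{liogen1} (which, under the stronger hypothesis $F(x,r,0,0)=0$, no longer needs $C_\zeta>0$ and hence no sign condition on $u$) to obtain a global maximum point $\bar x$, then translate $u$ by a constant so that the translate---still a subsolution by $r$-independence---attains a nonnegative maximum, and conclude via the strong maximum principle. The paper's proof is the one-line version of exactly this, shifting by $|u(\bar x)|$ rather than $-u(\bar x)$; your identification of where each hypothesis enters is accurate, with the small caveat that $F(x,r,0,0)=0$ is already used in step~(b) (to make any constant $C_\zeta$ a supersolution) and not only at the final SMP step.
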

\begin{proof}   
The proof goes along the same lines as Proposition \ref{liogen1}. It is sufficient to note that since $r\mapsto F(x,r,p,X)$ is constant for all $x,p, X$, $u+|u(\bar{x})|$, $\bar x$ standing for the maximum point in Proposition \ref{liogen1}, is again a subsolution, 
and one concludes.
\end{proof}

A symmetric result holds for the case of supersolutions to \eqref{fully2}, { see \cite{BC} for the details of the proof.}
\begin{prop}\label{liogen2}
Assume {\upshape (i'),(ii),(iii') and (iv')}. Let $v\in\LSC(\R^d)$ be a viscosity supersolution to \eqref{fully2} satisfying 
\begin{equation}\label{liminf}
{ \limsup_{|x|\rightarrow\infty}}\frac{v(x)}{W(x)}\leq0
\end{equation}
for $W$ as in {\upshape(iii')}. Assume either $v\leq0$, or $r\mapsto F(x,r,p,X)$ is constant for all $x,p, X$ and $F(x,r,0,0)=0$ for every $x\in\Omega$. Then $v$ is constant.
\end{prop}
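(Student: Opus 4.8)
The plan is to run the argument of Proposition~\ref{liogen1} and Corollary~\ref{corlio1} in mirror image: supersolutions replace subsolutions, minima replace maxima, the Lyapunov subsolution $W$ of (iii') replaces $w$, the subadditivity \eqref{sup} replaces \eqref{inf}, and every inequality involving $F$ is reversed. First I set $v_\zeta(x):=v(x)-\zeta W(x)$ for $\zeta>0$. Dividing \eqref{liminf} by $-W(x)>0$ (valid for $|x|$ large, since $W\to-\infty$) gives $\liminf_{|x|\to\infty}v_\zeta(x)/(-W(x))\geq\zeta>0$, whence $v_\zeta(x)\to+\infty$ as $|x|\to\infty$.

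Next I set $c_\zeta:=\min_{|x|\leq R_o}v_\zeta(x)$ and, as in Proposition~\ref{liogen1}, reduce to the case where $v$ is nonconstant on $\overline{B}(0,R_o)$. Under the hypothesis $v\leq0$, letting $\zeta\to0$ shows $c_\zeta\to\min_{|x|\leq R_o}v\leq0$, and this limit cannot be $0$ (otherwise $v\equiv0$ on the ball), so $c_\zeta<0$ for all small $\zeta$. By (i') the constant $c_\zeta$ then satisfies $F[c_\zeta]=F(x,c_\zeta,0,0)\leq0$, i.e. $c_\zeta$ is a viscosity subsolution of \eqref{fully2}. Since $v_\zeta\to+\infty$, there is $R_\zeta>R_o$ with $v_\zeta(x)\geq c_\zeta$ for all $|x|\geq R_\zeta$.

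The main step, and the point I expect to be the real work, is to prove that $v_\zeta$ is a viscosity \emph{supersolution} of \eqref{fully2} in $\{|x|>R_o\}$. Mirroring the corresponding step of Proposition~\ref{liogen1}, I take $\bar x$ with $|\bar x|>R_o$ and smooth $\varphi$ with $0=(v_\zeta-\varphi)(\bar x)<(v_\zeta-\varphi)(x)$ elsewhere, and argue by contradiction assuming $F[\varphi(\bar x)]<0$. Then $F[\varphi+\delta]<0$ on some $B(\bar x,r)$ for suitable $\delta,r>0$, and I choose $0<k<\delta$ with $v_\zeta-\varphi\geq k$ on $\partial B(\bar x,r)$. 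The claim is that $\zeta W+\varphi+k$ is a subsolution there: testing it from above at $\tilde x$ by $\psi$ and using that $W$ is a subsolution, $F$ proper, \eqref{sup}, and $F[\varphi+\delta]<0$ yields
\[
0\geq F[\psi-\varphi-k]\geq F[\psi-\varphi-\delta]\geq F[\psi]-F[\varphi+\delta]>F[\psi]\quad\text{at }\tilde x .
\]
Since on $\partial B(\bar x,r)$ one has $v\geq\zeta W+\varphi+k$, the comparison principle (ii) forces $\zeta W+\varphi+k\leq v$ throughout $B(\bar x,r)$, contradicting $v(\bar x)=\zeta W(\bar x)+\varphi(\bar x)$ because $k>0$. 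Hence $F[\varphi(\bar x)]\geq0$, as required.

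Finally, I apply the comparison principle (ii) in the annulus $\Om=\{R_o<|x|<R_\zeta\}$ to the subsolution $c_\zeta$ and the supersolution $v_\zeta$, which are ordered $c_\zeta\leq v_\zeta$ on both bounding spheres, to conclude $v_\zeta\geq c_\zeta$ in $\Om$ and hence for all $|x|\geq R_o$. Letting $\zeta\to0^+$ gives $v(x)\geq\min_{|y|\leq R_o}v(y)$ for every $x$, so $v$ attains its (nonpositive) minimum, and the strong minimum principle (iv') makes $v$ constant. In the alternative case where $r\mapsto F(x,r,p,X)$ is constant and $F(x,r,0,0)=0$, the constant $c_\zeta$ satisfies $F[c_\zeta]=0$ for every $\zeta$ regardless of its sign, so the comparison step needs no sign hypothesis on $v$; once $v$ is shown to attain its minimum, I shift $v$ by a constant — legitimate because $F$ does not depend on $r$ — so that this minimum becomes nonpositive, and conclude by (iv') exactly as in Corollary~\ref{corlio1}.
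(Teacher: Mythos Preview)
Your proof is correct and is precisely the symmetric argument the paper has in mind: the paper does not spell out the proof of Proposition~\ref{liogen2} but simply says ``A symmetric result holds for the case of supersolutions \ldots\ see \cite{BC} for the details,'' and your mirroring of Proposition~\ref{liogen1} and Corollary~\ref{corlio1} (with $W$, \eqref{sup}, minima, and the strong minimum principle in place of $w$, \eqref{inf}, maxima, and the strong maximum principle) is exactly that.
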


\subsection{Equations with H\"ormander vector fields}
In this section we discuss Liouville properties for PDEs over H\"ormander vector fields. We recall that 
the vector fields 
{ $Z_1,..., Z_m$ 
satisfy the H\"ormander's rank condition if

\smallskip
\noindent (H) \emph {the vector fields are smooth 
and the 
 Lie algebra generated by them has full rank $d$ at each point.}
 \smallskip
 
 \noindent
  The classical smoothness requirement on $Z_{i}$ is $C^\infty$,} but it can be reduced to $C^k$ for a suitable $k$, and considerably more if the Lie brackets are interpreted in a generalized sense, see \cite{FR} and the references therein.

Before stating the main result for subsolutions, we recall a crucial scaling assumption for the validity of the strong maximum principle for fully nonlinear subelliptic equations together with the concept of generalized subunit vector field.
\begin{itemize}
\item[(SC)] For some $\phi : (0,1]\to (0,+\infty]$, $F$ 
satisfies
\begin{equation*}
F(x,\xi s,\xi p,\xi X)\geq\ \phi(\xi)F(x,s,p,X)
\end{equation*}
for all $\xi\in(0,1]$, 
 $s\in[-1,0]$, 
  $x\in\Omega$, $p\in\R^d\backslash\{0\}$, and $X\in\Sym_d$;
\end{itemize}

\begin{defn}\label{subunit}  $Z\in \R^{d}$ is a generalized \emph{subunit vector} (briefly, SV)
 for $F=F(x,r,p,X)$ at $x\in\Omega$ if 
\begin{equation*}
\sup_{\gamma>0} F(x,0,p,I-\gamma p\otimes p)> 0 \quad \forall p\in \R^{d} \;\text{ such that } \; Z\cdot p\ne 0 ;
\end{equation*}
$Z:\Omega\rightarrow\R^{d}$ is a \emph{subunit vector field} (briefly, SVF)
if $Z(x)$ is SV for $F$ at $x$ for every $x\in\Omega$. 
\end{defn}
The name is motivated by the the notion introduced  by C. Fefferman and D.H. Phong \cite {FP} for linear operators. 
\begin{thm}\label{liosub}
Let $F$ be such that {\upshape (i),(ii), (iii), and (SC)} hold. Furthermore assume that $F$ admits $Z_1,...,Z_m$ generalized subunit vector fields satisfying the H\"ormander condition {\upshape (H)}. 
 Let $u\in\USC(\R^d)$ be a viscosity subsolution to \eqref{fully2} satisfying \eqref{limsup} for $w$ as in {\upshape(iii)}. Assume either $u\geq0$, or $r\mapsto F(x,r,p,X)$ is constant for all $x,p, X$ and $F(x,r,0,0)=0$ for every $x\in\Omega$. 
 Then $u$ is constant.
\end{thm}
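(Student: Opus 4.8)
The plan is to derive the statement from the abstract results already proved, Proposition \ref{liogen1} and Corollary \ref{corlio1}. Hypotheses (i), (ii) and (iii) are assumed here verbatim, and the growth condition \eqref{limsup} on $u$ together with the two alternatives on $F$ are exactly the hypotheses of Proposition \ref{liogen1} (in the case $u\geq 0$) and of Corollary \ref{corlio1} (in the case where $r\mapsto F(x,r,p,X)$ is constant and $F(x,r,0,0)=0$). Hence the only ingredient of those results that is not assumed directly is the strong maximum principle (iv), and the whole proof reduces to establishing (iv) from the scaling condition (SC) and the existence of the generalized subunit fields $Z_1,\dots,Z_m$ satisfying (H).

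To obtain (iv) I would invoke the strong maximum principle for fully nonlinear subelliptic equations proved by the authors in \cite{BG}. The mechanism is as follows: if a subsolution attains a non-negative maximum, the generalized subunit condition of Definition \ref{subunit} forces the maximum to propagate along the integral curves of each field $Z_i$, essentially because \eqref{subunitH} guarantees enough ellipticity in the direction $Z_i$ to run a Hopf-type barrier argument, while the scaling condition (SC) is precisely what makes that barrier construction go through for a genuinely nonlinear $F$, letting one control $F$ evaluated at the rescaled test functions appearing in the argument. Consequently the set where $u$ equals its maximum is invariant under the flow of every $Z_i$, and the H\"ormander rank condition (H), through the Chow--Rashevskii connectivity of the horizontal curves generated by $Z_1,\dots,Z_m$, upgrades this directional invariance to global constancy, giving (iv).

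I expect the genuine obstacle to lie entirely in this propagation step, namely the interaction between the viscosity formulation and the nonlinearity of $F$; this is exactly what the subunit/scaling machinery of \cite{BG} is designed to handle, so the task of the present proof is to check that its hypotheses are met rather than to redo the barrier analysis. Once (iv) is available we conclude at once: Proposition \ref{liogen1} applies under the first alternative $u\geq 0$ and Corollary \ref{corlio1} under the second, and in either case $u$ is constant. The companion statement for supersolutions would follow symmetrically from Proposition \ref{liogen2} and the strong minimum principle (iv').
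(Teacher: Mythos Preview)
Your proposal is correct and follows exactly the paper's route: the paper's proof simply observes that (iv) holds by the strong maximum principle of \cite[Corollary 2.6]{BG} under (i), (SC), and the existence of H\"ormander subunit fields, and then invokes Proposition \ref{liogen1} and Corollary \ref{corlio1}. Your additional sketch of the propagation/Chow--Rashevskii mechanism is a faithful summary of what \cite{BG} does, not a different argument.
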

\begin{proof}
The proof is a consequence of Proposition \ref{liogen1} and Corollary \ref{corlio1}, recalling that under {\upshape(i), (SC)} and the existence of subunit vector fields for $F$, the strong maximum principle holds (cf \cite[Corollary 2.6]{BG}).
\end{proof}
Similarly, in the case of supersolutions we have the following result by replacing (SC) with
\begin{itemize}
\item[(SC')] For some $\phi : (0,1]\to (0,+\infty]$, $F$ 
satisfies
\begin{equation*}
F(x,\xi s,\xi p,\xi X)\leq\ \phi(\xi)F(x,s,p,X)
\end{equation*}
for all $\xi\in(0,1]$, 
 $s\in[-1,0]$, 
  $x\in\Omega$, $p\in\R^d\backslash\{0\}$, and $X\in\Sym_d$;
\end{itemize}
and the condition in Definition \ref{subunit} is replaced with
\begin{equation*}
\inf_{\gamma>0} F(x,0,p,\gamma p\otimes p-I)> 0 \quad \forall p\in \R^{d} \;\text{ such that } \; Z\cdot p\ne 0 .
\end{equation*}
{ The proof is a simple consequence of 
 Proposition \ref{liogen2}, 
 and the strong minimum principle Corollary 2.12 in 
 \cite
 {BG}.
}
\begin{thm}\label{liosuper}
Let $F$ be such that {\upshape (i'),(ii),(iii'), and (SC')} hold. Furthermore, assume that $F$ admits $Z_1,...,Z_m$ generalized subunit vector fields satisfying the H\"ormander condition as above. Let $v\in\LSC(\R^d)$ be a viscosity supersolution to \eqref{fully2} satisfying \eqref{liminf} for $W$ as in {\upshape(iii')}. Assume either $v\leq0$, or $r\mapsto F(x,r,p,X)$ is constant for all $x,p, X$ and $F(x,r,0,0)=0$ for every $x\in\Omega$. Then $v$ is constant.
\end{thm}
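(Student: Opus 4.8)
The plan is to mirror the derivation of Theorem \ref{liosub}, replacing subsolutions by supersolutions throughout and invoking Proposition \ref{liogen2} in place of Proposition \ref{liogen1}. Concretely, I would check that the four hypotheses (i$'$), (ii), (iii$'$), (iv$'$) required by Proposition \ref{liogen2} are all available. The first three are assumed verbatim in the statement of the theorem, so the only thing left to verify is the strong minimum principle (iv$'$).

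To establish (iv$'$) I would appeal to the strong minimum principle for fully nonlinear subelliptic equations. The point is that the structural scaling hypothesis (SC$'$), together with the properness contained in (i$'$) and the existence of the family $Z_1,\ldots,Z_m$ of generalized subunit vector fields in the sense modified for supersolutions (i.e.\ $\inf_{\gamma>0}F(x,0,p,\gamma p\otimes p-I)>0$ whenever $Z_i\cdot p\neq 0$) satisfying the H\"ormander rank condition {\upshape(H)}, guarantees exactly the desired conclusion: any viscosity supersolution of \eqref{fully2} attaining a non-positive minimum is constant. This is the content of \cite[Corollary 2.12]{BG}. Heuristically, the subunit condition makes minima propagate along each horizontal field $Z_i$, the bracket-generating condition {\upshape(H)} lets these propagation directions connect any two points of $\R^d$, and (SC$'$) supplies the homogeneity needed to run the propagation argument for the nonlinear operator.

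With (iv$'$) in place, I would simply invoke Proposition \ref{liogen2}. Under the growth condition \eqref{liminf} for $W$ as in {\upshape(iii$'$)}, and under either of the two alternatives ($v\leq 0$, or $r\mapsto F(x,r,p,X)$ constant with $F(x,r,0,0)=0$), the proposition yields directly that $v$ is constant, which is precisely the assertion.

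The main obstacle is entirely absorbed into the cited strong minimum principle: all the subelliptic machinery --- propagation of minima along subunit fields and its combination with the H\"ormander condition --- lives in \cite[Corollary 2.12]{BG}. Within the present paper the argument is therefore a short assembly of Proposition \ref{liogen2} with that cited result, and no new estimates are needed; the only care required is to keep track of the sign reversals relative to Theorem \ref{liosub} (supersolutions, non-positive minima, and the reversed scaling and subunit inequalities).
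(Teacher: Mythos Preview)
Your proposal is correct and follows essentially the same approach as the paper: the paper's proof states in one sentence that the result is a consequence of Proposition \ref{liogen2} together with the strong minimum principle of \cite[Corollary 2.12]{BG}, which is precisely the assembly you describe. Your write-up simply spells out in more detail which hypotheses feed into that cited strong minimum principle.
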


{ Next we apply the last two theorems to subelliptic equations of the form 
\begin{equation}
\label{1Lbis}
G(x,u,D_{\mathcal{X}}u,(D_{\mathcal{X}}^2u)^*)=0 ,  \quad \text{ in }\R^d ,
\end{equation}
where $\mathcal{X}=(X_1,...,X_m)$ are $C^{1,1}$ vector fields 
on $\R^d$ satisfying the H\"ormander condition (H), $D_{\mathcal{X}}u:=(X_1 u,...,X_m u)$, $(D_{\mathcal{X}}^2 u)_{ij} :=X_i (X_j u)$, and $Y^*$ is the symmetrized matrix of $Y$. 
Here { $G : \R^d\times\R\times\R^m\times\mathcal S_m\to\R$} is at least continuous, proper, satisfying \eqref{inf},  and it is \emph{elliptic for any $x$ and $p$ fixed} 
 in the following sense:
\begin{equation}
\label{Gell}
\sup_{\gamma>0} G(x,0,q,X-\gamma q\otimes q)> 0 \quad \forall \, x\in \Om,\; q\in \R^{m}, \; q\ne 0 , \; X\in \Sym_m . 
\end{equation}
After choosing a basis in Euclidean space we write $X_j=\sigma^j\cdot D$, with $\sigma^j:\R^d\rightarrow\R^d$, and $\sigma=\sigma(x)=[\sigma^1(x),...,\sigma^m(x)]\in\R^{d\times m}$. 
Then
\begin{equation*}
D_{\mathcal{X}}u=\sigma^T Du=(\sigma^1\cdot Du,...,\sigma^m\cdot Du)
\end{equation*}
and
\begin{equation*}
X_i (X_j u)=(\sigma^T D^2 u\ \sigma)_{ij} +(D\sigma^j\ \sigma^i)\cdot Du\ .
\end{equation*}
Therefore,  for $u\in C^2$,
\begin{equation*}
(D_{\mathcal{X}}^2 u)^*=\sigma^T D^2 u\sigma+
g(x,Du) \,, \qquad (g(x,P))_{ij}:=\frac12[(D\sigma^j\ \sigma^i)\cdot p+(D\sigma^i\ \sigma^j)\cdot p] ,
\end{equation*}
%
and we can  rewrite the equation \eqref{1Lbis} in Euclidean coordinates, i.e., in the form $F(x,u,Du,D^2 u)=0$, by taking
\begin{equation}\label{FG}
F(x,r,p,X)=G(x,r,\sigma^T(x)p,\sigma^T(x)X\sigma(x)+g(x,Du)) .
\end{equation}
The ellipticity condition \eqref{Gell} implies that 
the vector fields  $X_j=\sigma^j\cdot D$ are subunit for $F$  (cfr. \cite[Lemma 3.1]{BG}). 
Moreover, if $G$ satisfies (SC) or (SC'), also $F$ does. Assume finally that $F$ given by \eqref{FG} satisfies property (ii) about the weak comparison principle. Then we have the following.
\begin{cor}
Under the assumptions listed above the equation \eqref{1Lbis} has the Liouville properties for viscosity sub- and supersolutions of Theorems \ref{liosub} and 
 \ref{liosuper} .
\end{cor}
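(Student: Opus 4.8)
The plan is to read this Corollary as a verification lemma: the equation \eqref{1Lbis} has already been rewritten in Euclidean coordinates as $F[u]=0$ with $F$ given by \eqref{FG}, so it suffices to check that this particular $F$ satisfies all the hypotheses of Theorems \ref{liosub} and \ref{liosuper} and then to invoke those theorems directly. The key structural observation I would put first is that the passage from $G$ to $F$ goes through the substitutions $p\mapsto\sigma^T(x)p$ and $X\mapsto\sigma^T(x)X\sigma(x)+g(x,p)$, i.e. it is \emph{affine} in each derivative slot; equivalently, the assignment $u\mapsto(D_\X u,(D_\X^2 u)^*)$ is linear on $C^2$. This linearity and the order properties of the substitution are exactly what is needed to transport the structural conditions from $G$ to $F$.

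For the subsolution statement I would verify the hypotheses of Theorem \ref{liosub} one at a time. Continuity of $F$ follows from continuity of $G$ together with $\sigma\in C^{1,1}$, so that $g(x,p)$, which is built from $D\sigma$, is continuous. Properness of $F$ reduces to properness of $G$ once one notes that $X\mapsto\sigma^T X\sigma$ is order preserving, since $v^T\sigma^T X\sigma\, v=(\sigma v)^T X(\sigma v)$, while monotonicity in the $r$ slot is untouched. The subadditivity \eqref{inf} for $F$ is inherited from the same property of $G$: by linearity of the horizontal derivatives, $F[\varphi-\psi]=G(x,\varphi-\psi,D_\X\varphi-D_\X\psi,(D_\X^2\varphi)^*-(D_\X^2\psi)^*)$, and \eqref{inf} for $G$ closes the estimate. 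The sign condition $F(x,r,0,0)\ge 0$ for $r\ge 0$ follows from $g(x,0)=0$, which gives $F(x,r,0,0)=G(x,r,0,0)$, so it amounts to the corresponding sign condition on $G$ at the origin. The existence of generalized subunit vector fields is precisely \cite[Lemma 3.1]{BG}: the ellipticity \eqref{Gell} forces the fields $X_j=\sigma^j\cdot D$ to be subunit for $F$, and these satisfy the H\"ormander condition (H) by assumption. The scaling condition (SC) for $F$ is inherited from (SC) for $G$, as already remarked, and property (ii) on comparison is assumed outright. Finally (iii), the existence of the Lyapunov function $w$, is itself a hypothesis of Theorem \ref{liosub} and is carried over verbatim. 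With (i),(ii),(iii),(SC) and the subunit fields in hand, Theorem \ref{liosub} yields the Liouville property for subsolutions.

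The supersolution case is entirely symmetric: one checks (i') with \eqref{sup} in place of \eqref{inf} and the reversed sign normalization, (SC') in place of (SC), and the $\inf$-type subunit condition, all of which transfer from $G$ by the same affine-substitution argument, while (ii) and the barrier (iii') are assumed; Theorem \ref{liosuper} then applies. I expect no serious obstacle here, since every delicate point has been isolated in the preceding discussion: the only genuine content is the order preservation of $X\mapsto\sigma^T X\sigma$ and the linearity of the horizontal derivatives (which together give properness and \eqref{inf}/\eqref{sup}), together with the transfer of the subunit property via \cite[Lemma 3.1]{BG}. The proof is thus a matter of bookkeeping rather than of a new idea, and I would present it as such.
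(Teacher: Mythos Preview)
Your proposal is correct and follows the same approach as the paper: the Corollary has no separate proof in the paper but is stated as an immediate consequence of the preceding discussion, which verifies exactly the ingredients you list (subunit fields via \cite[Lemma~3.1]{BG}, transfer of (SC)/(SC'), and the assumed comparison principle), after which Theorems \ref{liosub} and \ref{liosuper} apply directly. Your write-up is more explicit than the paper's about why properness, \eqref{inf}/\eqref{sup}, and the sign condition at the origin transfer from $G$ to $F$ through the affine substitution, but the route is the same.
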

}

\subsection{Equations driven by Pucci's subelliptic operators}
\label{puccieqs}


{ Given a family of $m$ vector fields and 
 the corresponding Hessian matrix, we consider the Pucci's extremal operators over such matrices instead of the classical Euclidean Hessians. 
Following Caffarelli and Cabr\'e \cite{CC}, } we fix $0<\lambda\leq\Lambda$, denote with $\Sym_{m}$ the set of $m\times m$ symmetric matrices, and let
\[
\mathcal{A}_{\lambda,\Lambda}:=\{A\in\Sym_m
:\lambda|\xi|^2\leq A\xi\cdot\xi\leq\Lambda|\xi|^2\ ,\forall\xi\in\R^m
\}\ .
\]
{ For  $M\in\Sym_{m}$ the \textit{maximal} and \textit{minimal 
 operator} are defined as
\begin{equation*}
\mathcal{M}^{+}_{\lambda,\Lambda}(M):=\sup_{A\in \mathcal{A}_{\lambda,\Lambda}}\mathrm{Tr}(-AM)
 , \qquad \mathcal{M}^{-}_{\lambda,\Lambda}(M):=\inf_{A\in\mathcal{A}_{\lambda,\Lambda}}\mathrm{Tr}(-AM)\ .
\end{equation*}
}
If $e_1\leq...\leq e_d$ are the ordered eigenvalues of the matrix $M$, one can check  that \cite[Section 2.2]{CC}
\begin{equation}
\label{representationM+}
\mathcal{M}^{+}_{\lambda,\Lambda}(M)=-\Lambda\sum_{e_k<0}e_k-\lambda\sum_{e_k>0}e_k , \qquad 
\mathcal{M}^{-}_{\lambda,\Lambda}(M)=-\Lambda\sum_{e_k>0}e_k-\lambda\sum_{e_k<0}e_k .
\end{equation}

Now we prove the Liouville property for subsolutions of the equation
\begin{equation}\label{pucci1}
\mathcal{M}^-_{\lambda,\Lambda}((D^2_{\mathcal{X}}u)^*)+H_i(x,u,D_{\mathcal{X}}u)=0 \quad \text{ in }\R^d\ ,
\end{equation}
where
\begin{equation}\label{Hi}
H_i(x,r,p):=\inf_{\alpha\in A}\{c^\alpha(x)r-b^\alpha(x)\cdot p\}
\end{equation}
and for 
 supersolutions of
\begin{equation}\label{pucci2}
\mathcal{M}^+_{\lambda,\Lambda}((D^2_{\mathcal{X}}u)^*)+H_s(x,u,D_{\mathcal{X}}u)=0 \quad \text{ in }\R^d\ ,
\end{equation}
where
\begin{equation}\label{Hs}
H_s(x,r,p):=\sup_{\alpha\in A}\{c^\alpha(x)r-b^\alpha(x)\cdot p\}\ .
\end{equation}
{ Here $A$ is a set of indices such that $H_i$ and $H_s$ are finite.} We assume that { $b^\alpha : \R^d \to \R^m$ } 
 is locally Lipschitz in $x$ uniformly in $\alpha$, i.e., for all $R>0$ there exists $K_R>0$ such that
\begin{equation}\label{blip}
\sup_{|x|,|y|\leq R, \alpha\in A}|b^\alpha(x)-b^\alpha(y)|\leq K_R|x-y|
\end{equation}
and 
\begin{equation}\label{cass}
c^\alpha(x)\geq0\text{ and continuous in $|x|\leq R$ uniformly in $\alpha$}.
\end{equation}
\begin{cor}\label{pucciminmax} { Assume the vector fields $\mathcal{X}$ 
are $C^{1,1}$ 
and satisfy the H\"ormander condition (H).}
\begin{itemize}
\item [(a)] Under the previous assumptions 
 on $H_i$, let $u\in\USC(\R^d)$ be a viscosity subsolution to \eqref{pucci1} satisfying \eqref{limsup} for $w$ as in {\upshape(iii)} . If either $u\geq0$ or $c^\alpha(x)\equiv0$, then $u$ is constant.
\item [(b)] Under the previous assumptions 
 on $H_s$, let $v\in\LSC(\R^d)$ be a viscosity supersolution to \eqref{pucci2} satisfying \eqref{liminf} for $W$ as in {\upshape(iii')}. If either $v\leq0$ or $c^\alpha(x)\equiv0$, then $v$ is constant.
\end{itemize}
\end{cor}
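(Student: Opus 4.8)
The plan is to deduce both statements from Theorems~\ref{liosub} and~\ref{liosuper}, by recognizing \eqref{pucci1} and \eqref{pucci2} as instances of the subelliptic equation \eqref{1Lbis} with
\[
G(x,r,q,X)=\mathcal{M}^{-}_{\lambda,\Lambda}(X)+H_i(x,r,q), \qquad\text{resp.}\qquad G(x,r,q,X)=\mathcal{M}^{+}_{\lambda,\Lambda}(X)+H_s(x,r,q),
\]
and then checking that the $F$ associated to $G$ by \eqref{FG} satisfies the hypotheses of those theorems. Since the intrinsic Hessian $u\mapsto(D_{\mathcal{X}}^2u)^*$ is affine in $(Du,D^2u)$, all the verifications reduce to algebraic properties of the Pucci operators combined with the infimum/supremum structure of $H_i,H_s$. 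I would prove (a) in full and obtain (b) by the symmetric argument, replacing $\mathcal{M}^{-}_{\lambda,\Lambda}$ by $\mathcal{M}^{+}_{\lambda,\Lambda}$, $H_i$ by $H_s$, (SC) by (SC'), \eqref{inf} by \eqref{sup}, and Theorem~\ref{liosub} by Theorem~\ref{liosuper}.

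For assumption (i), continuity of $F$ holds because \eqref{blip} and \eqref{cass} make $H_i$ a locally equicontinuous infimum of continuous affine functions, while properness is immediate: $\mathcal{M}^{-}_{\lambda,\Lambda}$ is nonincreasing in $X$ and $H_i$ is nondecreasing in $r$ since $c^\alpha\geq0$. The subadditivity \eqref{inf} follows by combining two facts: from the representation \eqref{representationM+} one reads off $\mathcal{M}^{-}_{\lambda,\Lambda}(M-N)\leq\mathcal{M}^{-}_{\lambda,\Lambda}(M)-\mathcal{M}^{-}_{\lambda,\Lambda}(N)$, while the elementary bound $\inf_\alpha(a_\alpha-b_\alpha)\leq\inf_\alpha a_\alpha-\inf_\alpha b_\alpha$ applied to the affine maps $(r,p)\mapsto c^\alpha r-b^\alpha\cdot p$ yields $H_i[\varphi-\psi]\leq H_i[\varphi]-H_i[\psi]$; adding these and using linearity of $u\mapsto(D_{\mathcal{X}}^2u)^*$ gives \eqref{inf}. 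Moreover $F(x,r,0,0)=\inf_\alpha c^\alpha(x)r\geq0$ for $r\geq0$, as required. Condition (SC) holds with $\phi(\xi)=\xi$, in fact with equality, because both $\mathcal{M}^{-}_{\lambda,\Lambda}$ and $H_i$ are positively $1$-homogeneous jointly in $(r,p,X)$. Finally, uniform ellipticity gives \eqref{Gell}: for $q\neq0$, sending the eigenvalue of $Y-\gamma q\otimes q$ in the direction $q$ to $-\infty$ forces $\mathcal{M}^{-}_{\lambda,\Lambda}(Y-\gamma q\otimes q)\to+\infty$ via \eqref{representationM+}, so the supremum over $\gamma$ is positive; hence by \cite[Lemma~3.1]{BG} the fields $X_1,\dots,X_m$ are generalized subunit vector fields for $F$, and they satisfy (H) by assumption. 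The supersolution analogue for (b) follows from the same computation with \eqref{representationM+}, now letting the relevant eigenvalue of $\mathcal{M}^{+}_{\lambda,\Lambda}$ diverge.

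The one genuinely analytic ingredient is the comparison principle (ii) for $F$ in bounded domains, which I expect to be the main obstacle: unlike the homogeneity and monotonicity checks above, it is not purely formal. It does hold for the uniformly subelliptic Pucci operators with the first-order terms $H_i,H_s$, using the Lipschitz bound \eqref{blip} on $b^\alpha$ together with the sign and continuity \eqref{cass} of $c^\alpha$, through the standard doubling-of-variables technique adapted to H\"ormander fields; I would invoke the corresponding comparison result rather than reprove it. Granting this, all hypotheses of Theorem~\ref{liosub} are met, and the conclusion follows at once in the two stated cases: when $u\geq0$ the first alternative of Theorem~\ref{liosub} applies directly, whereas when $c^\alpha\equiv0$ the map $r\mapsto F(x,r,p,X)$ is constant and $F(x,r,0,0)=0$, so the second alternative applies; in either case $u$ is constant. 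Statement (b) is obtained verbatim with the symmetric data, checking \eqref{sup} and the supersolution form of the subunit condition for $\mathcal{M}^{+}_{\lambda,\Lambda}$ and applying Theorem~\ref{liosuper}.
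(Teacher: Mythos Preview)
Your proposal is correct and follows essentially the same approach as the paper: both verify the hypotheses of Theorems~\ref{liosub} and~\ref{liosuper} by checking that the Pucci operators satisfy \eqref{inf}/\eqref{sup} and positive $1$-homogeneity (hence (SC)/(SC')), and by invoking the comparison principle for these uniformly subelliptic operators (the paper cites \cite[Example~4.6]{BG}), then handling the two alternatives $u\geq0$ versus $c^\alpha\equiv0$ exactly as you do. Your write-up is in fact more explicit than the paper's in verifying continuity, \eqref{Gell}, and the subunit condition, but the logical skeleton is identical.
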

\begin{proof}
{ The proofs of (a) and (b) are consequences, respectively, of Theorems \ref{liosub} and \ref{liosuper}. The  operators $\mathcal{M}^-_{\lambda,\Lambda}((D^2_{\mathcal{X}}u)^*)$ and $\mathcal{M}^+_{\lambda,\Lambda}((D^2_{\mathcal{X}}u)^*)$ enjoy, respectively, the property \eqref{inf} and \eqref{sup} as a consequence of the property of duality (i.e., $\mathcal{M}^-_{\lambda,\Lambda}(M)=-\mathcal{M}^+_{\lambda,\Lambda}(-M)$
) and { the 
inequalities 
$$\mathcal{M}^-_{\lambda,\Lambda}(M+N)\leq \mathcal{M}^-_{\lambda,\Lambda}(M)+\mathcal{M}^+_{\lambda,\Lambda}(N), \quad \mathcal{M}^+_{\lambda,\Lambda}(M+N)\geq \mathcal{M}^-_{\lambda,\Lambda}(M)+\mathcal{M}^+_{\lambda,\Lambda}(N)$$
for every $M,N\in\Sym_d$, see \cite[Lemma 2.10
]{CC}. 
}
Moreover, they are positively $1$-homogeneous, so they satisfy the scalings (SC) and (SC').}

The comparison principle (ii) 
 holds for both equations in view of \cite[Example 4.6]{BG}. Finally, observe that when $c^\alpha\equiv0$, then $G(x,r,0,0)=0$ for every $x\in\Omega$, $r\in\R$, and $r\mapsto G(x,r,p,X)$ is constant for every $x,p,X$. 
\end{proof}

{ Corollary \ref{pucciminmax} concerns only operators that are either convex or concave with respect to the derivatives of the solution. However, we 
 will use it in the next Section \ref{fully} to study general fully nonlinear uniformly subelliptic equations.
 
 A different, although similar, class of extremal operators 
was  introduced by C. Pucci in the seminal paper \cite{Pucci66} (for the Euclidean Hessian). Here we consider them on the Hessian associated to a the vector fields $\mathcal{X}$. 
Consider}  for $\alpha>0$ the class of matrices
\begin{equation*}
\mathcal{B}_{\alpha}:=\{A\in\Sym_m:A\xi\cdot\xi\geq\alpha|\xi|^2,\mathrm{Tr}(A)=1,\forall\xi\in\R^m\} ,
\end{equation*}
and define
\begin{equation}\label{Pucci+}
\mathcal{P}^{+}_{\alpha}(M):=\sup_{A\in \mathcal{B}_{\alpha}}\mathrm{Tr}(-AM) , \qquad \mathcal{P}^{-}_{\alpha}(M)=\inf_{A\in\mathcal{B}_{\alpha}}\mathrm{Tr}(-AM)\ .
\end{equation}
As pointed out in \cite{Pucci66} (see also \cite[Chapter 17]{GT}), { these operators can be rewritten in terms of  the ordered eigenvalues of the matrix $M\in\Sym_m$ as follows }
\begin{equation}
\label{representation+}
\mathcal{P}^{+}_{\alpha}(M)=-\alpha\sum_{k=2}^{m
}e_k-[1-(m
-1)\alpha]e_1=-\alpha\mathrm{Tr}(M)-(1-m
\alpha)e_1 
\,,
\end{equation}
\begin{equation}
\label{representation-}
\mathcal{P}^{-}_{\alpha}(M)=-\alpha\sum_{k=1}^{m-1}e_k-[1-(m-1)\alpha]e_m=-\alpha\mathrm{Tr}(M)-(1-m\alpha)e_m\, .
\end{equation}
{ The Liouville properties of Corollary \ref{pucciminmax} hold under the same assumptions for the equations \eqref{pucci1} and \eqref{pucci2} with the operators $\mathcal{M}^-_{\lambda,\Lambda}((D^2_{\mathcal{X}}u)^*)$ and $\mathcal{M}^+_{\lambda,\Lambda}((D^2_{\mathcal{X}}u)^*)$ replaced, respectively, by $\mathcal{P}^{-}_{\alpha}((D^2_{\mathcal{X}}u)^*)$ and $\mathcal{P}^{+}_{\alpha}((D^2_{\mathcal{X}}u)^*)$. { In 
Section \ref{full_Hei}} { we will give some explicit results for extremal equations involving $\mathcal{P}^{\pm}$ on the Heisenberg group.}

}
%

\subsection{Fully nonlinear uniformly subelliptic equations}\label{fully}

In this section we consider the 
{ general fully nonlinear 
 subelliptic equation
\begin{equation}\label{2L}
G(x,u,D_{\mathcal{X}}u,(D_{\mathcal{X}}^2u)^*)=0 \quad \text{ in }\R^d
\end{equation}
when { $G : \R^d\times\R\times\R^m\times\mathcal S_m\to\R$} satisfies 
the following form of uniform ellipticity}
\begin{equation}\label{US}
\mathcal{M}^-_{\lambda,\Lambda}(M-N)\leq G(x,r,p,M)-G(x,r,p,N)\leq \mathcal{M}^+_{\lambda,\Lambda}(M-N)
\end{equation}
for every $(x,r,p)\in \Omega\times\R\times\R^m$ and $M,N\in\Sym_m$ with $N\geq0$. By taking $N=0$ we get
\begin{equation*}
\mathcal{M}^-_{\lambda,\Lambda}(M)\leq G(x,r,p,M)-G(x,r,p,0)\leq \mathcal{M}^+_{\lambda,\Lambda}(M)\ ,
\end{equation*}
and, as a consequence, by setting $H(x,r,p):=G(x,r,p,0)$, one can infer Liouville results for viscosity subsolutions and supersolutions to { \eqref{2L} by comparison with the equation \eqref{pucci1} and \eqref{pucci2} of the previous section.
For this purpose we assume either}
\begin{equation}\label{G>H}
G(x,r,p,0)\geq H_i(x,r,p) \quad \forall \, x \in \R^d , r\in\R , p\in\R^m ,
\end{equation}
for a concave Hamiltonian of the form \eqref{Hi}, { or 
\begin{equation}\label{G<H}
G(x,r,p,0)\leq H_s(x,r,p)
\end{equation}
for a convex $H_s$ as in \eqref{Hs}.
}
\begin{cor}
\label{fullymin}
 { Assume the vector fields $\mathcal{X}$ 
are $C^{1,1}$ 
and satisfy the H\"ormander condition (H), \eqref{blip}, \eqref{cass}, and the ellipticity condition \eqref{US}.}
\begin{itemize}
\item [(a)]
Let \eqref{G>H} hold and 
 $u\in\USC(\R^d)$ be a viscosity subsolution to \eqref{2L} satisfying \eqref{limsup} for $w$ as in {\upshape(iii)}. If either $u\geq0$ or $c^\alpha(x)\equiv0$, then $u$ is constant.
\item [(b)]
Let \eqref{G<H} hold and $v\in\LSC(\R^d)$ be a viscosity supersolution to \eqref{2L} satisfying \eqref{liminf} for $W$ as in {\upshape(iii')}. If either $v\leq0$ or $c^\alpha(x)\equiv0$, then $v$ is constant.
\end{itemize}
\end{cor}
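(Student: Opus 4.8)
The plan is to reduce both statements to the extremal equations \eqref{pucci1} and \eqref{pucci2}, for which the Liouville property has already been established in Corollary \ref{pucciminmax}. This detour is not cosmetic: a generic $G$ obeying only the uniform ellipticity \eqref{US} need not satisfy the subadditivity \eqref{inf} (resp. \eqref{sup}), so Theorems \ref{liosub}/\ref{liosuper} cannot be applied to $G$ directly, whereas the Pucci extremal operators do satisfy these structural conditions and the comparison principle, as recorded in the proof of Corollary \ref{pucciminmax}.

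For part (a) I would first set $N=0$ in \eqref{US} to obtain $G(x,r,p,M)-G(x,r,p,0)\geq \mathcal{M}^-_{\lambda,\Lambda}(M)$, and combine this with \eqref{G>H} to reach the pointwise bound
\[
G(x,r,p,M)\geq \mathcal{M}^-_{\lambda,\Lambda}(M)+H_i(x,r,p)
\]
for all admissible $x,r,p$ and all $M\in\Sym_m$. The next step is to transfer this inequality to the viscosity level. If $\varphi$ touches $u$ from above at $\bar x$, the subsolution property of $u$ for \eqref{2L} reads $G(\bar x,u(\bar x),D_{\mathcal X}\varphi,(D^2_{\mathcal X}\varphi)^*)\leq 0$; since the intrinsic Hessian $(D^2_{\mathcal X}\varphi)^*$ of the test function is the common quantity appearing in both equations, the displayed bound immediately yields $\mathcal{M}^-_{\lambda,\Lambda}((D^2_{\mathcal X}\varphi)^*)+H_i(\bar x,u(\bar x),D_{\mathcal X}\varphi)\leq 0$. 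Hence $u$ is a viscosity subsolution of \eqref{pucci1}. Because the growth condition \eqref{limsup} and the alternative ``$u\geq 0$ or $c^\alpha\equiv 0$'' are assumptions on $u$ and on the zero-th order coefficients alone, they are inherited verbatim, so Corollary \ref{pucciminmax}(a) applies and $u$ is constant. Part (b) is entirely symmetric: $N=0$ in \eqref{US} gives $G(x,r,p,M)-G(x,r,p,0)\leq \mathcal{M}^+_{\lambda,\Lambda}(M)$, which together with \eqref{G<H} produces $G(x,r,p,M)\leq \mathcal{M}^+_{\lambda,\Lambda}(M)+H_s(x,r,p)$; testing a supersolution $v$ from below by a smooth $\varphi$ then shows $\mathcal{M}^+_{\lambda,\Lambda}((D^2_{\mathcal X}\varphi)^*)+H_s\geq G\geq 0$, so $v$ is a viscosity supersolution of \eqref{pucci2} and Corollary \ref{pucciminmax}(b) gives the conclusion.

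I do not expect a serious obstacle, as the argument is just a one-sided comparison; the care lies entirely in the bookkeeping. First, the monotonicity conventions must be tracked so that the pointwise inequality between $G$ and the extremal operator survives passage through the definition of viscosity sub/supersolution (recalling that for these proper operators a subsolution satisfies the equation with ``$\leq 0$'' at touching points). Second, one should note that the Lyapunov function $w$ entering \eqref{limsup} is required by Corollary \ref{pucciminmax} to be a supersolution of the extremal equation \eqref{pucci1}, not merely of \eqref{2L}; this is harmless and in fact the natural object to exhibit, since a supersolution of \eqref{pucci1} is automatically a supersolution of \eqref{2L} via the same pointwise bound, and it is precisely such a $w$ (e.g.\ $w=\log\rho$ on $\He^d$) that will be constructed in Section \ref{heisenberg}.
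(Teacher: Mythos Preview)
Your approach is correct and matches the paper's own proof, which simply observes that $u$ (resp.\ $v$) satisfies the extremal inequality \eqref{pucci1} (resp.\ \eqref{pucci2}) and then invokes Corollary \ref{pucciminmax}. Your additional remark that the Lyapunov function $w$ in {\upshape(iii)} must be a supersolution of the extremal equation \eqref{pucci1} rather than of \eqref{2L} is a useful clarification the paper leaves implicit.
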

\begin{proof}
It is sufficient to observe that $u$ and $v$ satisfies the differential inequalities
\[
\mathcal{M}^-_{\lambda,\Lambda}((D^2_{\mathcal{X}}u)^*)+H_i(x,u,D_{\mathcal{X}}u)\leq0 \text{ in }\R^d \,,
\]
{ \[
\mathcal{M}^+_{\lambda,\Lambda}((D^2_{\mathcal{X}}v)^*)+H_s(x,v,D_{\mathcal{X}}v)\geq0 \text{ in }\R^d \,,
\] }
and apply Corollary \ref{pucciminmax}.
\end{proof}

{ The same kind of result holds for equations of the form
{
\begin{equation}\label{non-h-grad}
G(x,u,Du, (D_{\mathcal{X}}^2u)^*) =0\text{ in }\R^d\ ,
\end{equation}
for $G : \R^d\times\R\times\R^d\times\mathcal S_m\to\R$,} where the dependence is on the Euclidean gradient $Du$ instead of the horizontal one $D_{\mathcal{X}}u$. We assume $G$ satisfies \eqref{US} and either 
\begin{equation}\label{G>Hp}
{ G(x,r,p,0)}\geq H_i(x,r,p) ,  \quad \forall \, x, p\in \R^d , r\in\R ,
\end{equation}
for a concave Hamiltonian of the form \eqref{Hi}, but with  $b^\alpha : \R^d \to \R^d$ a vector field in $\R^d$,  or 
\begin{equation}\label{G<Hp}
{ G(x,r,p,0)}\leq H_s(x,r,p)     ,  \quad \forall \, x, p\in \R^d , r\in\R ,
\end{equation}
for a convex Hamiltonian of the form \eqref{Hs} with  $b^\alpha : \R^d \to \R^d$. On $b^\alpha$ and $c^\alpha$ we make the same assumptions \eqref{blip}, \eqref{cass}, and the fields $\mathcal{X}$ 
are $C^{1,1}$ 
and satisfy 
(H). The arguments leading to Corollaries \ref{pucciminmax} and \ref{fullymin} give the following.
\begin{cor} 
\label{eu-grad}
Under the conditions listed above a subsolution (resp., supersolution) of \eqref{non-h-grad} with assumption \eqref{G>Hp} (resp.,  \eqref{G<Hp}) verifies the Liouville property { \upshape (a)} (resp.,  { \upshape (b)})of Corollary \ref{fullymin}.
\end{cor}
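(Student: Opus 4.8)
The plan is to follow verbatim the reductions used for Corollaries \ref{pucciminmax} and \ref{fullymin}: first squeeze the equation \eqref{non-h-grad} between the two Pucci extremal inequalities, and then invoke Theorems \ref{liosub} and \ref{liosuper}. The only genuinely new point is to check that moving the first-order term from the horizontal gradient $D_{\mathcal{X}}u$ to the Euclidean gradient $Du$ leaves intact all the structural hypotheses (i),(ii),(SC) and the subunit-vector condition of Definition \ref{subunit}.

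For case (a), I would start from a subsolution $u$ of \eqref{non-h-grad}. Applying the uniform ellipticity \eqref{US} with $N=0$ together with the lower bound \eqref{G>Hp}, one obtains that $u$ solves, in the viscosity sense,
\[
\mathcal{M}^-_{\lambda,\Lambda}((D_{\mathcal{X}}^2u)^*)+H_i(x,u,Du)\leq 0\quad\text{in }\R^d,
\]
which is precisely \eqref{pucci1} except that $H_i$ is now evaluated at $Du$ instead of $D_{\mathcal{X}}u$; symmetrically, in case (b) a supersolution $v$ of \eqref{non-h-grad} under \eqref{G<Hp} satisfies $\mathcal{M}^+_{\lambda,\Lambda}((D_{\mathcal{X}}^2v)^*)+H_s(x,v,Dv)\geq 0$. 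Writing the left-hand side in Euclidean coordinates as $F(x,r,p,X)=\mathcal{M}^-_{\lambda,\Lambda}(\sigma^TX\sigma+g(x,p))+H_i(x,r,p)$, I would verify, exactly as in the proof of Corollary \ref{pucciminmax}, that: $F$ is proper and $F(x,r,0,0)=\inf_{\alpha}c^\alpha(x)r\geq 0$ for $r\geq0$, using $c^\alpha\geq0$ and $g(x,0)=0$; the subadditivity \eqref{inf} holds, combining the duality and subadditivity of $\mathcal{M}^{\pm}$ with the elementary bound $\inf_\alpha(f_\alpha-g_\alpha)\leq\inf_\alpha f_\alpha-\inf_\alpha g_\alpha$ for $H_i$; and the scaling (SC) holds with $\phi(\xi)=\xi$, since $\mathcal{M}^-$, the linearity of $g(x,\cdot)$ and the joint positive $1$-homogeneity of $H_i$ in $(r,p)$ make $F$ exactly $1$-homogeneous. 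None of these verifications sees the difference between $Du$ and $D_{\mathcal{X}}u$, because $H_i$ intervenes only through its concavity and homogeneity in $(r,p)$.

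Next I would check the subunit-vector condition for $X_i=\sigma^i\cdot D$: whenever $X_i(x)\cdot p\neq0$ one has $\sigma^Tp\neq0$, so the matrix $\sigma^T(I-\gamma p\otimes p)\sigma+g(x,p)$ has an eigenvalue diverging to $-\infty$ along $\sigma^Tp$ as $\gamma\to+\infty$, whence $\mathcal{M}^-_{\lambda,\Lambda}\to+\infty$ while the frozen term $H_i(x,0,p)$ stays bounded; thus $\sup_{\gamma>0}F(x,0,p,I-\gamma p\otimes p)=+\infty>0$, exactly as in \cite[Lemma 3.1]{BG}. Since this argument keeps the gradient slot fixed at $p$, it is again insensitive to the replacement of $D_{\mathcal{X}}u$ by $Du$. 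I expect the comparison principle (ii) to be the only step really requiring the Euclidean form: here the first-order term is $\inf_\alpha\{c^\alpha(x)r-b^\alpha(x)\cdot Du\}$ with $b^\alpha$ locally Lipschitz in $x$ and $c^\alpha\geq0$, which is the classical setting for comparison of degenerate elliptic equations through the Crandall--Ishii lemma, so (ii) follows as in \cite[Example 4.6]{BG}; this is the main, though routine, obstacle. With all hypotheses verified, Theorem \ref{liosub} closes case (a) when $u\geq0$, while if $c^\alpha\equiv0$ the reduced operator becomes independent of $r$ and vanishes at $(x,r,0,0)$, so Corollary \ref{corlio1} applies. Case (b) follows symmetrically from Theorem \ref{liosuper} and Proposition \ref{liogen2}.
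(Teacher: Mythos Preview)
Your proposal is correct and follows exactly the route the paper indicates: the paper's own proof is simply the sentence ``The arguments leading to Corollaries \ref{pucciminmax} and \ref{fullymin} give the following,'' and you have faithfully spelled out those arguments, checking that replacing $D_{\mathcal X}u$ by $Du$ in the first-order part does not affect any of the structural verifications (subadditivity, scaling, subunit vectors, comparison). Your write-up is in fact more detailed than what the paper provides.
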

} 
\subsection{{Normalized 
$p$-Laplacian.}}
The result of the last section encompasses degenerate equations of the form
\[
{-}|D_\X u|^{2-p}\mathrm{div}_\X(|D_\X u|^{p-2}D_\X u)=0 \quad \text{ in }\R^d ,
\]
{ where $\mathrm{div}_\X$ is the intrinsic divergence over the fields of the family $\X$.}
In fact the operator $E(D_\X u, (D^2_\X u)^*):=-|D_\X u|^{2-p}\mathrm{div}_\X(|D_\X u|^{p-2}D_\X u)$, called normalized or game-theoretic
$p$-Laplacian, can be 
 rewritten as $-\mathrm{Tr}[A(D_\X u)(D^2_\X u)^*]$, 
with 
\[
A(D_\X u)=I_m+(p-2)\frac{D_\X u\otimes D_\X u}{|D_\X u|^2} \,.
\]
In other words, 
$$E(D_\X u, (D^2_\X u)^*)=-\Delta_\X u-(p-2)|D_\X u|^{-2}\Delta_{\X,\infty}u ,$$
 where 
$\Delta_{\X,\infty}$ is the  $\infty$-Laplacian operator over the fields $\X$. 
 It is immediate to see that {
\[
\min\{1,p-1\}|\xi|^2\leq A(D_\X u)\xi\cdot \xi\leq\max\{1,p-1\}|\xi|^2\ ,
\]
showing that $E$ is uniformly subelliptic for $p\in(1,\infty)$.
Therefore such nonlinear operators 
can be compared with Pucci's extremal operators $\mathcal{M}^{\pm}_{\lambda,\Lambda}$ with $\lambda=\min\{1,p-1\}$ and $\Lambda=\max\{1,p-1\}$ over H\"ormander vector fields. They were studied recently by several authors, see e.g. 
  \cite{Pina} for the case of Carnot groups, and the references therein. The game theoretic $p$-Laplace operator on $\X$ is the sublaplacian if $p=2$, whereas for  $p=1$ it drives the evolutive equation 
 describing the motion of level sets by sub-Riemannian mean curvature.}

\section{
{ The Heisenberg vector fields}} 
\label{heisenberg}
The aim of this section is to specialize the results obtained in the previous one to viscosity subsolutions of \eqref{2L} fulfilling \eqref{US} over Heisenberg vector fields. We briefly recall some standard facts on the Heisenberg group. For further details we refer the reader to the monograph \cite{BLU}. The Heisenberg group $\He^d$ can be identified with $(\R^{2d+1},\circ)$, where $2d+1$ stands for the topological dimension and the group law $\circ$ is defined by
\begin{equation*}
x\circ y=\left(x_1+y_1,...,x_{2d}+y_{2d},x_{2d+1}+y_{2d+1}+2\sum_{i=1}^d(x_iy_{i+d}-x_{i+d}y_i ) \right) .
\end{equation*}
{
We denote with $x$  a point of $\R^{2d+1}$ and set
\[
x_H:=(x_1,...,x_{2d})\ .
\]
}
The $d$-dimensional Heisenberg algebra is the Lie algebra spanned by the $m=2d$ vector fields 
\begin{equation*}
X_i=\partial_i+2x_{i+d}\partial_{2d+1}\ ,
\end{equation*}
\begin{equation*}
X_{i+d}=\partial_{i+d}-2x_{i}\partial_{2d+1}\ ,
\end{equation*}
for $i=1,...,d$. 
Such vector fields satisfy the commutation relations
\[
[X_i,X_{i+d}]=-4\partial_{2d+1}\text{ and }[X_i,X_j]=0\text{ for all $j\neq i+d$, }i\in\{1,...,d\}\ .
\]
and are 1-homogeneous with respect to the family of (anisotropic) dilations
\[
\delta_\lambda(x)=(\lambda x_1,...,\lambda x_{2d},\lambda^2x_{2d+1}) \ ,\lambda>0\ ,
\]

Following \cite[Definition 5.1.1]{BLU}, it is useful to consider the following homogeneous norm defined via the stratification property of $\He^d$
\begin{equation}\label{homoH}
\rho(x)=\left(\left(\sum_{i=1}^{2d}(x_i)^2\right)^2+x_{2d+1}^2\right)^{\frac14}\ .
\end{equation}
which is 1-homogeneous with respect to the previous group of dilations. { We emphasize that 
 this norm is easier to compute than 
  the Carnot-Carath\'eodory norm.} 

\subsection{Fully nonlinear PDEs on the Heisenberg group}
\label{full_Hei}
In the next result we provide sufficient conditions for the validity of the Liouville property for viscosity subsolutions to \eqref{pucci1}. We search the Lyapunov functions of property (iii) and (iii') among the radial ones (e.g. $w=\log\rho$ and $W=-\log\rho$). 
Here and in the next examples we exploit a classical chain rule to compute the horizontal gradient and Hessian of a ``radial" function with respect to the homogeneous norm $\rho$. Indeed, for a sufficiently smooth radial function $f=f(\rho)$ and given a system of vector fields $\X=\{X_1,...,X_m\}$, we have
\begin{equation*}
D_{\X}f(\rho)=f'(\rho)D_{\X}\rho
\end{equation*}
and
\begin{equation*}
D^2_{\X}f(\rho)=f'(\rho)D^2_{\X}\rho+f^{''}(\rho)D_{\X}\rho\otimes D_{\X}\rho\ .
\end{equation*}
In this section we denote the Heisenberg horizontal gradient and symmetrized Hessian by $D_{\mathbb{H}^d}$ and $(D^2_{\mathbb{H}^d})^*$.
We premise the following auxiliary result taken from \cite[Lemma 3.1 and Lemma 3.2]{CTchou}.
\begin{lemma}\label{hesH}
Let $\rho$ be defined in \eqref{homoH}. Then, { for $|x_H|\neq0$,
\[
D_{\mathbb{H}^d}\rho = \frac{\eta}{\rho^3} , \qquad |D_{\mathbb{H}^d}\rho|^2=\frac{|x_H|^2}{\rho^2}\leq 1\, ,
\]
where $\eta\in\R^{2d}$ 
is defined by 
\begin{equation}
\label{eta}
\eta_i:=x_i|x_H|^2+x_{i+d}x_{2d+1}\ , \qquad
\eta_{i+d}:=x_{i+d}|x_H|^2-x_{i}x_{2d+1} .
\end{equation}
for $i=1,...,d$. Moreover }
\[
D^2_{\mathbb{H}^d}\rho=-\frac{3}{\rho}D_{\mathbb{H}^d}\rho\otimes D_{\mathbb{H}^d}\rho+\frac{1}{\rho}|D_{\mathbb{H}^d}\rho|^2I_{2d}+\frac{2}{\rho^3}\begin{pmatrix}B & C\\-C & B\end{pmatrix}\ ,
\]
where the matrices $B=(b_{ij})$ and $C=(c_{ij})$ are defined as follows
\[
b_{ij}:=x_ix_j+x_{d+i}x_{d+j}\ ,\qquad c_{ij}:=x_ix_{d+j}-x_jx_{d+i}
\]
for $i,j=1,...,d$ (in particular $B=B^T$ and $C=-C^T$).
 In addition, for a radial function $f=f(\rho)$ we have
\[
D^2_{\mathbb{H}^d}f(\rho)=\frac{f'(\rho)|D_{\mathbb{H}^d}\rho|^2}{\rho}I_{2d}+2\frac{f'(\rho)}{\rho^3}\begin{pmatrix}B & C\\-C & B\end{pmatrix}+\left(f''(\rho)-3\frac{f'(\rho)}{\rho}\right)D_{\mathbb{H}^d}\rho\otimes D_{\mathbb{H}^d}\rho\ .
\]
{ and its eigenvalues are $f''(\rho)|D_{\mathbb{H}^d}\rho|^2, 3f'(\rho)\frac{|D_{\mathbb{H}^d}\rho|^2}{\rho}$, which are simple, and $f'(\rho)\frac{|D_{\mathbb{H}^d}\rho|^2}{\rho}$ with multiplicity $2d-2$}.
\end{lemma}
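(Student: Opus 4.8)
The plan is to compute everything directly from the explicit expression $\rho^4=|x_H|^4+x_{2d+1}^2$ together with the definitions $X_i=\partial_i+2x_{i+d}\partial_{2d+1}$ and $X_{i+d}=\partial_{i+d}-2x_i\partial_{2d+1}$. First I would compute the Euclidean partials: differentiating $\rho^4=|x_H|^4+x_{2d+1}^2$ gives $4\rho^3\partial_k\rho=4|x_H|^2x_k$ for $k\le 2d$ and $4\rho^3\partial_{2d+1}\rho=2x_{2d+1}$, hence $\partial_k\rho=|x_H|^2x_k/\rho^3$ and $\partial_{2d+1}\rho=x_{2d+1}/(2\rho^3)$. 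Plugging these into $X_i\rho=\partial_i\rho+2x_{i+d}\partial_{2d+1}\rho$ and $X_{i+d}\rho=\partial_{i+d}\rho-2x_i\partial_{2d+1}\rho$ yields at once $X_i\rho=\eta_i/\rho^3$ with $\eta$ as in \eqref{eta}, i.e. $D_{\mathbb{H}^d}\rho=\eta/\rho^3$. For the norm I would use the algebraic identity $\eta_i^2+\eta_{i+d}^2=(x_i^2+x_{i+d}^2)(|x_H|^4+x_{2d+1}^2)=(x_i^2+x_{i+d}^2)\rho^4$, sum over $i=1,\dots,d$ to obtain $|\eta|^2=|x_H|^2\rho^4$, and divide by $\rho^6$, giving $|D_{\mathbb{H}^d}\rho|^2=|x_H|^2/\rho^2\le 1$, the inequality because $|x_H|^4\le\rho^4$.

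Next, the Hessian. I would differentiate $X_j\rho=\eta_j\rho^{-3}$ once more, writing $X_i(X_j\rho)=(X_i\eta_j)\rho^{-3}-3\eta_j\rho^{-4}X_i\rho=(X_i\eta_j)\rho^{-3}-3\eta_i\eta_j\rho^{-7}$. The second summand is exactly $-\tfrac{3}{\rho}(D_{\mathbb{H}^d}\rho\otimes D_{\mathbb{H}^d}\rho)_{ij}$. The first requires computing $X_i\eta_j$ from the explicit polynomials $\eta_j$ and then symmetrizing (the antisymmetric part being controlled by $[X_i,X_{i+d}]=-4\partial_{2d+1}$): one finds that the symmetric part of $X_i\eta_j$ equals $|x_H|^2\delta_{ij}+2M_{ij}$, where $M:=\begin{pmatrix}B & C\\-C & B\end{pmatrix}$ with $B,C$ as stated. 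Dividing by $\rho^3$ and using $|D_{\mathbb{H}^d}\rho|^2=|x_H|^2/\rho^2$ produces the remaining terms $\tfrac{1}{\rho}|D_{\mathbb{H}^d}\rho|^2I_{2d}+\tfrac{2}{\rho^3}M$. This bookkeeping is the longest and most error-prone step, and I expect it to be the main obstacle. The formula for $D^2_{\mathbb{H}^d}f(\rho)$ then follows immediately from the chain rule $D^2_\X f(\rho)=f'(\rho)D^2_\X\rho+f''(\rho)D_\X\rho\otimes D_\X\rho$ stated before the lemma.

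Finally, the eigenvalues. The key is to diagonalize $M$. Writing $a=(x_1,\dots,x_d)$ and $b=(x_{d+1},\dots,x_{2d})$, a direct check gives $M\binom{a}{b}=|x_H|^2\binom{a}{b}$ and $M\binom{b}{-a}=|x_H|^2\binom{b}{-a}$, while $\Tr M=2\Tr B=2|x_H|^2$; hence $M$ acts as $|x_H|^2$ on the orthogonal $2$-plane $P=\mathrm{span}\{\binom{a}{b},\binom{b}{-a}\}$ and as $0$ on $P^\perp$, of dimension $2d-2$. (Equivalently, the Hermitian matrix $B+iC$ with entries $\bar z_iz_j$, $z_j:=x_j+ix_{j+d}$, has rank one.) Since $\eta=|x_H|^2\binom{a}{b}+x_{2d+1}\binom{b}{-a}$, the vector $D_{\mathbb{H}^d}\rho$ lies in $P$, so $M D_{\mathbb{H}^d}\rho=|x_H|^2 D_{\mathbb{H}^d}\rho$.

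To conclude I would split $\R^{2d}=P^\perp\oplus \R\,D_{\mathbb{H}^d}\rho\oplus\big(P\cap(D_{\mathbb{H}^d}\rho)^\perp\big)$ and evaluate the radial Hessian on each summand, using $|D_{\mathbb{H}^d}\rho|^2=|x_H|^2/\rho^2$ and $(D_{\mathbb{H}^d}\rho\otimes D_{\mathbb{H}^d}\rho)D_{\mathbb{H}^d}\rho=|D_{\mathbb{H}^d}\rho|^2 D_{\mathbb{H}^d}\rho$. On $P^\perp$ only the identity term survives, giving $f'(\rho)|D_{\mathbb{H}^d}\rho|^2/\rho$ with multiplicity $2d-2$; on $P$ the identity-plus-$M$ part contributes the base value $\tfrac{f'}{\rho}|D_{\mathbb{H}^d}\rho|^2+\tfrac{2f'}{\rho^3}|x_H|^2=3\tfrac{f'}{\rho}|D_{\mathbb{H}^d}\rho|^2$, which is the eigenvalue in the direction of $P$ orthogonal to $D_{\mathbb{H}^d}\rho$, while adding the rank-one term $(f''-3f'/\rho)|D_{\mathbb{H}^d}\rho|^2$ along $D_{\mathbb{H}^d}\rho$ produces $f''(\rho)|D_{\mathbb{H}^d}\rho|^2$. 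These are precisely the three eigenvalues claimed.
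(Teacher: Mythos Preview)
Your proof is correct and complete. The paper does not prove this lemma at all: it simply states it as ``taken from \cite[Lemma 3.1 and Lemma 3.2]{CTchou}''. Your direct computation --- differentiating $\rho^4=|x_H|^4+x_{2d+1}^2$, expanding $X_i(X_j\rho)$, and diagonalizing $M=\begin{pmatrix}B&C\\-C&B\end{pmatrix}$ via the complex rank-one matrix $B+iC=(\bar z_iz_j)$ --- reproduces exactly the formulas and eigenvalues claimed; the identification of the invariant $2$-plane $P=\mathrm{span}\{\binom{a}{b},\binom{b}{-a}\}$ containing $D_{\mathbb{H}^d}\rho$ is the clean way to read off the spectrum and is presumably what the cited reference does as well.
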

\begin{thm}\label{Cor1H}
Let $\mathcal{X}=\{X_1,....,{ X_{2d}}
\}$ be the system of vector fields generating the Heisenberg group $\He^d$. Assume that \eqref{blip} and \eqref{cass} are in force and
\begin{equation}\label{condcor1}
\sup_{\alpha\in A}\{b^\alpha(x)\cdot \frac{\eta}{|x_H|^2}-c^\alpha(x)\frac{\rho^4}{|x_H|^2}\log\rho\}\leq \lambda-\Lambda(Q-1)
\end{equation}
for $\rho$ sufficiently large and $|x_H|\neq0$, where $Q=2d+2$ is the homogeneous dimension of $\mathbb{H}^d$, $b^\alpha(x)$ takes values in $\R^{2d}$, and $\eta=(\eta_i,\eta_{i+d})$ is defined { by \eqref{eta}}. 
%
\begin{itemize}
\item[(A)] Let $u\in\USC(\R^{2d+1})$ be a viscosity subsolution of \eqref{pucci1} such that
\begin{equation}
\label{grow+}
\limsup_{|x|\rightarrow\infty}\frac{u(x)}{\log\rho(x)}\leq0\ .
\end{equation}
If either $c^\alpha(x)\equiv0$ or $u\geq0$, then $u$ is a constant.
\item[(B)] Let $v\in\LSC(\R^{2d+1})$ be a viscosity supersolution of \eqref{pucci2} such that
\begin{equation}
\label{grow-}
\liminf_{|x|\rightarrow\infty}\frac{v(x)}{\log\rho(x)}\geq0\ .
\end{equation}
If either $c^\alpha(x)\equiv0$ or $v\leq0$, then $v$ is a constant.
\end{itemize}
\end{thm}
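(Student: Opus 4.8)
The plan is to derive both statements from Corollary \ref{pucciminmax} by exhibiting, in each case, an explicit radial Lyapunov function built from the homogeneous norm $\rho$: for (A) I would take $w=\log\rho$ and verify hypothesis (iii), i.e. that $w$ blows up at infinity and is a supersolution of \eqref{pucci1} outside a large ball; for (B) I would take $W=-\log\rho$ and verify (iii'). Since $\rho$ is $1$-homogeneous with respect to the dilations, $\rho(x)\to+\infty$ iff $|x|\to+\infty$, so $w\to+\infty$ (resp. $W\to-\infty$) is immediate, and the growth assumption \eqref{grow+} (resp. \eqref{grow-}) is exactly \eqref{limsup} (resp. \eqref{liminf}) written for this choice of Lyapunov function. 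Once (iii)/(iii') is established, the conclusion is a direct application of Corollary \ref{pucciminmax}(a) (resp. (b)), invoking the sign hypothesis $u\geq0$ or $c^\alpha\equiv0$ (resp. $v\leq0$ or $c^\alpha\equiv0$).

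The core of the argument is checking that $w=\log\rho$ is a classical supersolution of \eqref{pucci1} on $\{|x|>R,\ |x_H|\neq0\}$ for $R$ large. I would apply Lemma \ref{hesH} with $f=\log$, so $f'(\rho)=1/\rho$ and $f''(\rho)=-1/\rho^2$; then the eigenvalues of $(D^2_{\mathbb{H}^d}\log\rho)^*$ are the single negative value $-|x_H|^2/\rho^4$, the single positive value $3|x_H|^2/\rho^4$, and $|x_H|^2/\rho^4$ with multiplicity $2d-2$. Inserting these into the representation \eqref{representationM+} of $\mathcal{M}^-_{\lambda,\Lambda}$, which weights the positive eigenvalues by $-\Lambda$ and the negative one by $-\lambda$, gives
\[
\mathcal{M}^-_{\lambda,\Lambda}\big((D^2_{\mathbb{H}^d}\log\rho)^*\big)=\big(\lambda-\Lambda(Q-1)\big)\frac{|x_H|^2}{\rho^4},
\]
where the homogeneous dimension $Q=2d+2$ enters precisely through the count $3+(2d-2)=2d+1=Q-1$ of positive eigenvalues. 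Since $D_{\mathbb{H}^d}\log\rho=\eta/\rho^4$ by Lemma \ref{hesH}, the supersolution inequality $\mathcal{M}^-_{\lambda,\Lambda}((D^2_{\mathbb{H}^d}\log\rho)^*)+H_i(x,\log\rho,D_{\mathbb{H}^d}\log\rho)\geq0$ becomes, after dividing by the positive factor $|x_H|^2/\rho^4$ and using $H_i=\inf_\alpha\{\cdot\}$, exactly the requirement that $b^\alpha(x)\cdot\eta/|x_H|^2-c^\alpha(x)(\rho^4/|x_H|^2)\log\rho\leq\lambda-\Lambda(Q-1)$ for every $\alpha$; taking the supremum over $\alpha$ this is precisely \eqref{condcor1}.

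It remains to treat the characteristic set $\{x_H=0\}$. Here I would note that $\rho=(|x_H|^4+x_{2d+1}^2)^{1/4}$ is smooth away from the origin, since the argument of the fourth root is a polynomial strictly positive off the origin, so $\log\rho$ is $C^2$ there; at a center point both $D_{\mathbb{H}^d}\log\rho$ and $(D^2_{\mathbb{H}^d}\log\rho)^*$ vanish, whence $\mathcal{M}^-_{\lambda,\Lambda}(0)+H_i(x,\log\rho,0)=\inf_\alpha\{c^\alpha(x)\log\rho\}\geq0$ using $c^\alpha\geq0$ and $\log\rho>0$ for $\rho$ large. Thus $\log\rho$ is a classical, hence viscosity, supersolution of \eqref{pucci1} on all of $\{|x|>R\}$, proving (iii). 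The case (B) is entirely symmetric: with $W=-\log\rho$ the three eigenvalues change sign, so \eqref{representationM+} gives $\mathcal{M}^+_{\lambda,\Lambda}((D^2_{\mathbb{H}^d}(-\log\rho))^*)=(\Lambda(Q-1)-\lambda)|x_H|^2/\rho^4$ and the subsolution inequality for \eqref{pucci2} reduces to the same condition \eqref{condcor1}, while on the center $\mathcal{M}^+_{\lambda,\Lambda}(0)+H_s(x,-\log\rho,0)=\sup_\alpha\{-c^\alpha\log\rho\}\leq0$.

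The point that requires care, and the only place where the geometry of $\He^d$ genuinely enters, is the Pucci computation of the second paragraph: one must split the eigenvalues by sign and assign the weights $\lambda,\Lambda$ correctly, since this produces the sharp constant $\lambda-\Lambda(Q-1)$ and matches the $\inf_\alpha$/$\sup_\alpha$ structure of the Hamiltonian to the ``for all $\alpha$'' nature of the differential inequality. The characteristic set, which one might fear to be the obstacle, is in fact harmless because $\log\rho$ is truly smooth there with vanishing horizontal derivatives, and the sign condition $c^\alpha\geq0$ closes the estimate.
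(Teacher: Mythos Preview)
Your proposal is correct and follows essentially the same route as the paper: choose $w=\log\rho$ (resp. $W=-\log\rho$), use Lemma \ref{hesH} to compute the eigenvalues of the horizontal Hessian, evaluate $\mathcal{M}^\mp_{\lambda,\Lambda}$ via \eqref{representationM+}, reduce the supersolution (resp. subsolution) inequality to \eqref{condcor1}, handle the characteristic set $\{x_H=0\}$ by the vanishing of the horizontal derivatives together with $c^\alpha\geq0$, and conclude via Corollary \ref{pucciminmax}. Your treatment is slightly more explicit in places (smoothness of $\rho$ on the center, the arithmetic yielding $Q-1$), but there is no substantive difference in strategy.
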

\begin{rem}
When $b\equiv c\equiv 0$ and $\lambda=\Lambda=1$ (i.e. \eqref{pucci1} becomes $-\Delta_{\He^d}u=0$) condition \eqref{condcor1} gives $\lambda-\Lambda(Q-1)=(2-Q)\geq0$, which is not satisfied in  the Heisenberg group because $Q\geq4$. This is consistent with the failure of the Liouville property for sub- and supersolutions of the Heisenberg sub-Laplacian {  that we prove in Section \ref{cex} below.}
\end{rem}
\begin{proof}
We only have to check property {\upshape(iii)} { for 
 the Lyapunov function 
 $w(x)
 =\log\rho(x)$. Note} that $\lim_{|x|\to\infty}w
 (x)=\infty$ because 
 $\rho\to\infty$ as $|x|\to\infty$.
{ By Lemma \ref{hesH}  applied to the radial function $w$} 
the eigenvalues of $(D^2_{\He^d}w)^*$ are
\[
-\frac{|x_H|^2}{\rho^4} \,\text{ and }\, 3\frac{|x_H|^2}{\rho^4},\quad \text{which are simple} ,
\]
and
\[
\frac{|x_H|^2}{\rho^4}\text{ with multiplicity $2d-2$ }
\]
when $|D_{\He^d}\rho|\neq0$. 
 { Otherwise all the eigenvalues vanish identically and  $w
$ is trivially a supersolution to \eqref{pucci1} because $\mathcal{M}^-_{\lambda,\Lambda}((D^2_{\He^d}w)^*)=0$ 
 and $c^\alpha u\geq0$.}
 Hence, we are able to compute the Pucci's minimal operator at points where $|x_H|\neq0$ owing to Lemma \ref{hesH} as
\begin{equation*}
\mathcal{M}^-_{\lambda,\Lambda}((D^2_{\He^d}w)^*)=\{-\Lambda(2d+1)+\lambda\}\frac{|x_H|^2}{\rho^4}\ .
\end{equation*}
Thus, $w$ is a supersolution at all points where
\begin{equation*}
\{-\Lambda(2d+1)+\lambda\}\frac{|x_H|^2}{\rho^4}+\inf_{\alpha\in A}\{c^\alpha(x)\log\rho-b^\alpha(x)\cdot\frac{\eta}{\rho^4}\}\geq0\ ,
\end{equation*}
{ because $D_{\He^d}w= \eta/\rho^4$ by Lemma \ref{hesH}. }  
In particular, this inequality holds when $\rho$ is sufficiently large under condition \eqref{condcor1} by recalling that $Q=2d+2$. Similarly, one can check that \eqref{condcor1} implies that the function $W(\rho)=-\log\rho$ is a subsolution to \eqref{pucci2} for $|x|$ sufficiently large at points where $|D_{\He^d}\rho|\neq0$. Therefore Corollary \ref{pucciminmax} gives the conclusion.
\end{proof}
\begin{rem}
Condition \eqref{condcor1} is comparable to that obtained in \cite[condition (2.17)]{BC}, but here typical quantities of Carnot groups appear. One may think that the ratio
\[
\frac{\rho^4}{|x_H|^2}=\frac{|x_H|^4+|x_V|^2}{|x_H|^2}
\]
plays exactly the same role as $|x|^2$ in \cite[condition (2.17)]{BC}, while the dimension $d$ of the Euclidean setting is precisely replaced by its sub-Riemannian counterpart $Q$, as expected.
\end{rem}
\begin{rem}
A simple condition that implies \eqref{condcor1}, and therefore the Liouville property,  is 
\begin{equation*}
\limsup_{|x|\rightarrow\infty}\sup_{\alpha\in A} b^\alpha(x)\cdot \frac{\eta}{|x_H|^2} <\lambda-\Lambda(Q-1)\ ,
\end{equation*}
since $c\geq0$. Compare the above condition to that in \cite[Remark 2.4]{BC}: $Q$ replaces the dimension $d$ of the Euclidean case and $x\in\R^d$ is replaced by the vector $\eta/|x_H|^2\in\R^{2d}$, where $\eta$ is defined by \eqref{eta}.
\end{rem}
{ We can now give explicit conditions for  the Liouville properties 
for the general subelliptic equation \eqref{2L} on the Heisenberg group.}
\begin{cor}
Assume that the operator $G$ { satisfies \eqref{US}, 
$\X=\{X_1,...,X_{2d}\}$}  are the Heisenberg vector fields,  
and \eqref{blip}, \eqref{cass}, and \eqref{condcor1} are satisfied. 

\noindent {\upshape(A)} Assume 
 \eqref{G>H} and $u\in\USC(\R^{2d+1})$ is a subsolution of \eqref{2L} satisfying  { \eqref{grow+}.}
 If either $c^\alpha(x)\equiv0$ or $u\geq0$, then $u$ is constant.

\noindent {\upshape(B)} Assume 
 \eqref{G<H} and $v\in\LSC(\R^{2d+1})$ is a supersolution of \eqref{2L} satisfying  { \eqref{grow-}.}
 If either $c^\alpha(x)\equiv0$ or $v\leq0$, then $v$ is constant.
\end{cor}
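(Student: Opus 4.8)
The plan is to reduce the general equation \eqref{2L} to the Pucci extremal equations \eqref{pucci1} and \eqref{pucci2}, whose Liouville properties on the Heisenberg group have already been settled in Theorem \ref{Cor1H} under exactly the present hypotheses \eqref{condcor1}, \eqref{grow+} and \eqref{grow-}. This is precisely the mechanism used to pass from Corollary \ref{pucciminmax} to Corollary \ref{fullymin}, now specialized to the Heisenberg vector fields $\mathcal{X}=\{X_1,\dots,X_{2d}\}$, for which $(D^2_{\mathcal{X}}u)^*=(D^2_{\He^d}u)^*$.

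For part (A) I would first check that every viscosity subsolution $u$ of \eqref{2L} is automatically a viscosity subsolution of \eqref{pucci1}. Fix a smooth $\varphi$ touching $u$ from above at a point $x$, so that $G(x,u(x),D_{\mathcal{X}}\varphi,(D^2_{\mathcal{X}}\varphi)^*)\leq0$ by definition. The left-hand inequality in \eqref{US} with $N=0$ gives $\mathcal{M}^-_{\lambda,\Lambda}((D^2_{\mathcal{X}}\varphi)^*)\leq G(x,u(x),D_{\mathcal{X}}\varphi,(D^2_{\mathcal{X}}\varphi)^*)-G(x,u(x),D_{\mathcal{X}}\varphi,0)$, whence $\mathcal{M}^-_{\lambda,\Lambda}((D^2_{\mathcal{X}}\varphi)^*)+G(x,u(x),D_{\mathcal{X}}\varphi,0)\leq0$. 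Invoking the one-sided bound \eqref{G>H} then yields $\mathcal{M}^-_{\lambda,\Lambda}((D^2_{\mathcal{X}}\varphi)^*)+H_i(x,u(x),D_{\mathcal{X}}\varphi)\leq0$, which is exactly the subsolution property for \eqref{pucci1}.

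With this reduction the conclusion is immediate: the function $u$ is unchanged, so the growth condition \eqref{grow+} is the same as the one required in Theorem \ref{Cor1H}, and \eqref{condcor1} guarantees that $w=\log\rho$ is a supersolution of \eqref{pucci1} outside a large ball. Under the sign hypothesis (either $c^\alpha\equiv0$ or $u\geq0$) Theorem \ref{Cor1H}(A) gives that $u$ is constant. Part (B) is entirely symmetric: the right-hand inequality in \eqref{US} together with \eqref{G<H} shows that a supersolution $v$ of \eqref{2L} is a supersolution of \eqref{pucci2}, and Theorem \ref{Cor1H}(B), with $W=-\log\rho$ and the growth condition \eqref{grow-}, closes the argument.

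I do not expect a genuine obstacle, since the statement is a composition of results already proved. The only point deserving care is that the algebraic passage through \eqref{US} and \eqref{G>H} must be carried out at the level of the test functions $\varphi$ touching $u$, not at the level of the possibly non-smooth $u$ itself; this is routine and is already implicit in the proof of Corollary \ref{fullymin}.
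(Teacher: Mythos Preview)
Your proposal is correct and follows the same approach as the paper: reduce \eqref{2L} to \eqref{pucci1} (resp., \eqref{pucci2}) via \eqref{US} and \eqref{G>H} (resp., \eqref{G<H}), then invoke Theorem \ref{Cor1H}. You supply more detail on the test-function level reduction than the paper does, but the logic is identical.
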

\begin{proof}
It is enough to exploit that $u$ { (resp., $v$)} is a subsolution to \eqref{pucci1} (resp., a supersolution to \eqref{pucci2}) over the Heisenberg group and then apply Theorem \ref{Cor1H}-(A) (resp., Theorem \ref{Cor1H}-(B)).
\end{proof}

We specialize the last corollaries to a class of examples in order to compare with those in \cite{BC}. Consider again 
the fully nonlinear uniformly subelliptic PDE \eqref{2L} 
and assume that either
\begin{equation}
\label{hypH}
G(x,r,p,0)\geq-\bar{b}(x)\cdot p-g(x)|p|+\bar{c}(x)r\,,
\end{equation}
{ or 
\begin{equation}
\label{hypH2}
{G}(x,r,p,0)\leq-\bar{b}(x)\cdot p+g(x)|p|+\bar{c}(x)r \,,
\end{equation}
where $\bar{b}:\R^{2d+1}\rightarrow\R^{2d}$ and $g:\R^{2d+1}\rightarrow\R$ are locally Lipschitz, $\bar{c}$ is continuous,} $g\geq0$, and $\bar{c}\geq0$. 
\begin{cor}
\label{corollbcg}
Assume that the operator $G$ in \eqref{2L} satisfies \eqref{US} and 
\begin{equation}\label{condcor1bis}
\bar{b}(x)\cdot \frac{\eta}{|x_H|^2}+g(x)\frac{|\eta|}{|x_H|^2}\leq\bar{c}(x)\frac{\rho^4}{|x_H|^2}\log\rho+\lambda-\Lambda(Q-1) \,,
\end{equation}
{ for $\rho$ sufficiently large and $|x_H|\neq0$, where $\eta$ is defined by \eqref{eta} and $Q=2d+2$.}

\noindent {\upshape(A)} Suppose  that \eqref{hypH} holds and
$u\in\USC(\R^{2d+1})$ is a viscosity subsolution of \eqref{2L} satisfying \eqref{grow+}. If
either $c^\alpha(x)\equiv0$ or $u\geq0$, then $u$ is a constant. 

{ \noindent {\upshape(B)} Suppose  that \eqref{hypH2} holds and
 $v\in\LSC(\R^{2d+1})$ is a viscosity supersolution of \eqref{2L} satisfying \eqref{grow-}}. If
either $v\leq0$ or $c^\alpha(x)\equiv0$, then $v$ is constant.
\end{cor}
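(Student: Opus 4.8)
The plan is to reduce Corollary \ref{corollbcg} to the already-established Theorem \ref{Cor1H} by recognizing that the structural hypotheses \eqref{hypH} and \eqref{hypH2} are special cases of the comparison conditions \eqref{G>H} and \eqref{G<H}. First I would treat part (A): assuming \eqref{hypH}, I rewrite the lower bound as a concave Hamiltonian of the form \eqref{Hi}. The term $-\bar b(x)\cdot p - g(x)|p| + \bar c(x) r$ must be expressed as $\inf_{\alpha\in A}\{c^\alpha(x) r - b^\alpha(x)\cdot p\}$. Since $-g(x)|p| = \inf_{|e|\le 1}\{-g(x)\, e\cdot p\}$ for $g\ge 0$, I would take the index set $A$ to be the closed unit ball in $\R^{2d}$, set $c^\alpha(x) = \bar c(x)$ (independent of $\alpha$), and $b^\alpha(x) = \bar b(x) + g(x)\,\alpha$ for $\alpha = e$. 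This realizes the right-hand side of \eqref{hypH} exactly as an $H_i$ of the form \eqref{Hi}, so that \eqref{G>H} holds.

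The second step is to verify that the main hypothesis \eqref{condcor1} of Theorem \ref{Cor1H} follows from the explicit condition \eqref{condcor1bis}. With the identification above, I compute
\[
\sup_{\alpha\in A}\Bigl\{b^\alpha(x)\cdot \frac{\eta}{|x_H|^2} - c^\alpha(x)\frac{\rho^4}{|x_H|^2}\log\rho\Bigr\}
= \bar b(x)\cdot\frac{\eta}{|x_H|^2} + g(x)\sup_{|e|\le 1} e\cdot\frac{\eta}{|x_H|^2} - \bar c(x)\frac{\rho^4}{|x_H|^2}\log\rho,
\]
and since $\sup_{|e|\le 1} e\cdot \eta = |\eta|$, the supremum equals
\[
\bar b(x)\cdot\frac{\eta}{|x_H|^2} + g(x)\frac{|\eta|}{|x_H|^2} - \bar c(x)\frac{\rho^4}{|x_H|^2}\log\rho.
\]
Thus \eqref{condcor1bis} is precisely the statement that this quantity is bounded above by $\lambda - \Lambda(Q-1)$ for $\rho$ large, which is exactly \eqref{condcor1}. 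The hypotheses \eqref{blip} and \eqref{cass} translate into the local Lipschitz continuity of $\bar b$ and $g$ and the nonnegativity and continuity of $\bar c$, which are assumed. Therefore Theorem \ref{Cor1H}-(A) applies and yields that $u$ is constant.

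Part (B) is entirely symmetric: under \eqref{hypH2} I write the upper bound $-\bar b(x)\cdot p + g(x)|p| + \bar c(x) r$ as a convex Hamiltonian $H_s$ of the form \eqref{Hs} by taking $A$ again the unit ball, $c^\alpha = \bar c$, and $b^\alpha(x) = \bar b(x) - g(x)\,\alpha$, using $g(x)|p| = \sup_{|e|\le 1} g(x)\, e\cdot p$. The same algebraic identity shows \eqref{condcor1bis} reproduces \eqref{condcor1}, and Theorem \ref{Cor1H}-(B) gives that $v$ is constant. I do not expect any serious obstacle here, since the corollary is a direct specialization; the only point requiring care is the bookkeeping in the reformulation of the $g(x)|p|$ term as an infimum (resp.\ supremum) over a compact index set and checking that the resulting coefficients satisfy the regularity assumptions \eqref{blip}–\eqref{cass} uniformly in $\alpha$, which holds because $\bar b$ and $g$ are locally Lipschitz and $\alpha$ ranges over a bounded set.
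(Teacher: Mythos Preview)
Your proof is correct and follows essentially the same route as the paper: both rewrite $-g(x)|p|$ as $\inf_\alpha\{-g(x)\,\alpha\cdot p\}$ over a compact index set (you take the closed unit ball, the paper the unit sphere; equivalent since $g\ge 0$), set $b^\alpha=\bar b+g\alpha$, $c^\alpha=\bar c$, and then observe that \eqref{condcor1bis} is exactly \eqref{condcor1} for this family. The only point to make fully explicit is the passage from a subsolution of \eqref{2L} to a subsolution of \eqref{pucci1} via \eqref{US} before invoking Theorem~\ref{Cor1H}; you allude to this by citing \eqref{G>H}, but strictly speaking it is Corollary~\ref{fullymin} (or the corollary immediately preceding this one) rather than Theorem~\ref{Cor1H} that applies directly to \eqref{2L}.
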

\begin{proof} { As in the proof of Theorem \ref{Cor1H} we must only check that $w=\log\rho$ is a supersolution.
Observe that $-|D_{\He^d}w|=-|\sigma^TDw|=\min_{|\alpha|=1}\{-\alpha\cdot \sigma^TDw\}$. Hence we can write the right-hand side of the inequality \eqref{hypH} with $p=D_{\He^d}w$ as
\begin{equation*}
\inf_{\alpha\in A}\{\bar{c}w-(\bar{b}+g\alpha)\cdot\sigma^TDw\}\ ,
\end{equation*}
}
where $A=\{\alpha\in\R^{2d}:|\alpha|=1\}$. Moreover 
{ $D_{\He^d}w=\frac{1}{\rho^4}\eta$ by Lemma \ref{hesH}. 
Then $w$ is a supersolution where
\begin{equation*}
\{-\Lambda(2d+1)+\lambda\}\frac{|x_H|^2}{\rho^4}+ \bar{c}\log\rho+\inf_{\alpha\in A}\{-(\bar{b}+g\alpha)\cdot\frac{\eta}{\rho^4}\}\geq0 \,,
\end{equation*}
and this inequality is satisfied for $\rho$  large enough if \eqref{condcor1bis} holds.

Arguing in a similar manner we prove (B) by showing that $W=-\log\rho$ is a subsolution.} 
\end{proof}
{ \begin{ex} [Schr\"odinger-type equations] \label{schro} For nonnegative subsolutions of the equation
\[
\mathcal{M}^-_{\lambda,\Lambda}((D^2_{\He^d}u)^*) + \bar{c}(x)u =0 \quad \text{ in }\R^{2d+1}
\]
the Liouville property holds if 
\[
\liminf_{
|x|\to\infty} \bar{c}(x)\frac{\rho^4(x)}{|x_H|^2}\log\rho(x)>{\Lambda(Q-1)-\lambda}\ ,
\]
a results that appears to be new even in the linear case $\lambda=\Lambda$.
\end{ex} }
{ \begin{ex} [A horizontal Ornstein-Uhlenbeck equation] Consider  subsolutions of the equation 
\[
\mathcal{M}^-_{\lambda,\Lambda}((D^2_{\He^d}u)^*) - \gamma(x) \eta(x) \cdot D_{\He^d}u =0 \quad \text{ in }\R^{2d+1}\ 
\]
where $\gamma(x) >0$ and $\eta$ is defined by \eqref{eta}, i.e.,
\[
\eta(x)=x_H|x_H|^2+ x_{2d+1}x_H^\perp \,,\qquad x_H^\perp:=(x_{d+1},...,x_{2d}, -x_1,...,-x_d) \,.
\]
Since $\eta\cdot\eta
=|x_H|^2 \rho^4$, the condition \eqref{condcor1bis} becomes
\[
\liminf_{|x|\to\infty} \gamma(x)\rho^4(x)>{\Lambda(Q-1)-\lambda}
\]
and then the Liouville property holds. 
\end{ex} }

We end this subsection with a result on the following equations driven by the Pucci's extremal operators $\mathcal{P}^\pm_\lambda$ defined by \eqref{Pucci+} (here $\lambda=\alpha$)
\begin{equation}\label{pucci1bis}
\mathcal{P}^-_{\lambda}((D^2_{\mathcal{X}}u)^*)+H_i(x,u,D_{\mathcal{X}}u)=0\quad \text{ in }\R^{2d+1}\ ,
\end{equation}
\begin{equation}\label{pucci2bis}
\mathcal{P}^+_{\lambda}((D^2_{\mathcal{X}}u)^*)+H_s(x,u,D_{\mathcal{X}}u)=0\quad \text{ in }\R^{2d+1}\ .
\end{equation}
Sufficient conditions for the Liouville property can be 
 obtained by comparing 
  $\mathcal{P}^\pm$ with $\mathcal{M}^\pm$ as follows 
\begin{equation*}
\mathcal{P}^+_{\lambda}(M)\leq\mathcal{M}^{+}_{\lambda,\lambda+(1-d\lambda)}(M) \,,\quad \mathcal{P}^-_{\lambda}(M)\geq\mathcal{M}^{-}_{\lambda,\lambda+(1-d\lambda)}(M)\,, \quad \forall\, M\in\Sym_{2d} \,.
\end{equation*}
However, 
by exploiting the representation formulas \eqref{representation+} and \eqref{representation-} 
for $\mathcal{P}^\pm$ we can get 
 optimal sufficient conditions. 
\begin{cor}\label{Cor1P}
Let $\mathcal{X}=\{X_1,....,X_{2d}
\}$ be the system of vector fields generating the Heisenberg group $\He^d$. Assume 
 \eqref{blip}, \eqref{cass}, 
  and
\begin{equation}\label{condcor1p}
\sup_{\alpha\in A}\{b^\alpha(x)\cdot \frac{\eta}{|x_H|^2}-c^\alpha(x)\frac{\rho^4}{|x_H|^2}\log\rho\}\leq 4d\lambda -3
\end{equation}
for $\rho$ sufficiently large and 
$|D_{\He^d}\rho|\neq0$.
{ Then the same conclusions as in Theorem \ref{Cor1H}  hold for subsolutions of \eqref{pucci1bis} 
 and supersolutions of \eqref{pucci2bis}. }
\end{cor}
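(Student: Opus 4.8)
The plan is to run verbatim the scheme of the proof of Theorem \ref{Cor1H}. As noted in the text preceding the statement, the Liouville conclusions of Corollary \ref{pucciminmax} remain valid for the equations \eqref{pucci1bis} and \eqref{pucci2bis} driven by $\mathcal{P}^-_\lambda$ and $\mathcal{P}^+_\lambda$, since these operators share with $\mathcal{M}^\pm_{\lambda,\Lambda}$ the structural properties \eqref{inf}, \eqref{sup}, positive $1$-homogeneity (hence (SC), (SC')), and the comparison principle. Thus everything reduces to checking the Lyapunov hypotheses: that $w=\log\rho$ is a viscosity supersolution of \eqref{pucci1bis} for $\rho$ large, and symmetrically that $W=-\log\rho$ is a subsolution of \eqref{pucci2bis}. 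The divergence condition $\lim_{|x|\to\infty}w(x)=+\infty$ is immediate because $\rho\to\infty$ as $|x|\to\infty$.

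First I would extract from Lemma \ref{hesH}, applied to the radial profile $f(\rho)=\log\rho$, the eigenvalues of $(D^2_{\He^d}w)^*$, namely $-\frac{|x_H|^2}{\rho^4}$ and $3\frac{|x_H|^2}{\rho^4}$ (simple) together with $\frac{|x_H|^2}{\rho^4}$ of multiplicity $2d-2$; these are exactly the ones already recorded in the proof of Theorem \ref{Cor1H}. The only change is how they are combined: in place of \eqref{representationM+} I would feed them into the representation \eqref{representation-} for $\mathcal{P}^-_\lambda$, which involves only the trace and the largest eigenvalue. On the set $|D_{\He^d}\rho|\neq0$ one has $|x_H|^2/\rho^4>0$, so the largest eigenvalue is $e_{2d}=3\frac{|x_H|^2}{\rho^4}$, while $\mathrm{Tr}\big((D^2_{\He^d}w)^*\big)=2d\,\frac{|x_H|^2}{\rho^4}$. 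Hence
\[
\mathcal{P}^-_\lambda\big((D^2_{\He^d}w)^*\big)=-\lambda\, 2d\frac{|x_H|^2}{\rho^4}-(1-2d\lambda)\, 3\frac{|x_H|^2}{\rho^4}=(4d\lambda-3)\frac{|x_H|^2}{\rho^4},
\]
which is precisely what produces the constant $4d\lambda-3$ on the right-hand side of \eqref{condcor1p}, in place of $\lambda-\Lambda(Q-1)$.

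With this identity and $D_{\He^d}w=\eta/\rho^4$ from Lemma \ref{hesH}, the supersolution inequality for $w$ becomes $(4d\lambda-3)\frac{|x_H|^2}{\rho^4}+\inf_{\alpha\in A}\{c^\alpha\log\rho-b^\alpha\cdot\frac{\eta}{\rho^4}\}\geq0$; multiplying by the positive factor $\rho^4/|x_H|^2$ turns this into exactly \eqref{condcor1p}. For part (B) I would run the mirror computation with $W=-\log\rho$: all eigenvalues change sign, so Lemma \ref{hesH} gives trace $-2d\frac{|x_H|^2}{\rho^4}$ and smallest eigenvalue $-3\frac{|x_H|^2}{\rho^4}$, and \eqref{representation+} yields $\mathcal{P}^+_\lambda\big((D^2_{\He^d}W)^*\big)=-(4d\lambda-3)\frac{|x_H|^2}{\rho^4}$, so the subsolution inequality for $W$ reduces again to \eqref{condcor1p}. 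On the singular set $|x_H|=0$ all eigenvalues vanish, and since $c^\alpha\geq0$ gives $c^\alpha u\geq0$ for the nonnegative solutions considered, $w$ (resp.\ $W$) is trivially a super- (resp.\ sub-)solution, exactly as in Theorem \ref{Cor1H}.

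I do not expect any hard step: the whole content is that the Pucci operators $\mathcal{P}^\pm_\lambda$ are governed by a single extremal eigenvalue rather than by the signed sum appearing in $\mathcal{M}^\pm_{\lambda,\Lambda}$. The one point demanding care is correctly selecting that extremal eigenvalue in \eqref{representation+}--\eqref{representation-} — the largest one for $\mathcal{P}^-_\lambda$, the smallest for $\mathcal{P}^+_\lambda$ — which relies on $|x_H|^2/\rho^4>0$, i.e.\ on restricting to points where $|D_{\He^d}\rho|\neq0$, exactly the hypothesis imposed in the statement. Once this is done, the optimal threshold $4d\lambda-3$ drops out, and it is sharper than what the comparison $\mathcal{P}^-_\lambda(M)\geq\mathcal{M}^-_{\lambda,\lambda+(1-d\lambda)}(M)$ recorded just before the statement would give; this is why the direct use of \eqref{representation-} is preferable.
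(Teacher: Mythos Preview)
Your proposal is correct and follows essentially the same route as the paper's own proof: use $w=\log\rho$ (resp.\ $W=-\log\rho$) as Lyapunov function, read off the eigenvalues of $(D^2_{\He^d}w)^*$ from Lemma \ref{hesH} as in the proof of Theorem \ref{Cor1H}, and plug them into the representation formulas \eqref{representation+}--\eqref{representation-} to obtain $\mathcal{P}^-_\lambda((D^2_{\He^d}w)^*)=(4d\lambda-3)\frac{|x_H|^2}{\rho^4}$, which yields the threshold $4d\lambda-3$ in \eqref{condcor1p}. Your write-up is in fact more detailed than the paper's, which simply records the final identity and refers back to Theorem \ref{Cor1H}.
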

\begin{proof}
The proof is 
 the same as Theorem \ref{Cor1H} using the Lyapunov function $w(\rho)=\log\rho$ and the 
  formulas 
   \eqref{representation+} and \eqref{representation-}.
By the expression of the eigenvalues of $(D^2_{\He^d}w)^*$ in the proof of Theorem \ref{Cor1H} one finds
\[
\mathcal{P}^-_\lambda((D^2_{\He^d}w)^*)=(4d\lambda-3)\frac{|x_H|^2}{\rho^4}\ .
\]
Similarly, one uses $W=-\log\rho$ as Lyapunov function for the maximal operator $\mathcal{P}^+_\lambda$.
\end{proof}
\begin{rem}
Condition \eqref{condcor1p} is better than \eqref{condcor1} with $\Lambda=\lambda+(1-2d\lambda)$ and $\lambda<\frac{1}{2d}$, since
\begin{equation*}
-2d\lambda-(1-2d\lambda)(2d+1)<-3+4d\lambda\ .
\end{equation*}
\end{rem}

\subsection{Comparison with the literature and sharpness of the conditions}\label{cex}
In this section we make a comparison with the results in the literature, showing the sharpness of our conditions and those of \cite{BC, CLeoni, CTchou} via several counterexamples.
\subsubsection{{ The Euclidean case}}
{ Corollary 2.4 of \cite{BC} 
 states a Liouville-type result 
 that in the case without lower order terms holds } { for the inequalities
\[
\mathcal{M}^{-}_{\lambda,\Lambda}(D^2u)\leq 0\text{ in }\R^d , \qquad \mathcal{M}^{+}_{\lambda,\Lambda}(D^2u)\geq 0\text{ in }\R^d ,
\]
the former for viscosity subsolutions bounded above, and the second for supersolutions bounded from below, 
  when $d\leq \frac{\lambda}{\Lambda}+1$. 
  This complements the result  of \cite{CLeoni} on \eqref{M+} and \eqref{M-} recalled in the Introduction,
  but with a more restrictive condidion on $d$, which is, however, still sharp for the Lapalcian (${\lambda}={\Lambda}$).
  The next  counterexample shows that the 
 inequalities can have nonconstant solutions when $d>\frac{\lambda}{\Lambda}+1$.}
%
\begin{cex}
For $d\geq2$ { set 
$\beta:=\frac{\Lambda}{\lambda}(d-1)+1 
$ 
and consider }the function
\[
u_2(x)=\begin{cases}
\frac18[\beta(\beta-2)|x|^4-2(\beta^2-4)|x|^2+\beta(\beta+2)]&\text{ if }|x|<1\ ,\\
\frac{1}{|x|^{\beta-2}}&\text{ if }|x|\geq1\ .
\end{cases}
\]
{ Since it 
is radial, 
 the eigenvalues of the Hessian matrix can be 
 computed 
 by \cite[Lemma 3.1]{CLeoni} and one checks that it
is } a 
 classical solution to $\mathcal{M}^+_{\lambda,\Lambda}(D^2u_2)\geq0$ in $\R^d$. 
 { Moreover it is bounded and not constant if $\beta>2$, which is equivalent to $d>\lambda/\Lambda+1$, so the Liouville property for supersolutions to $\mathcal{M}^+(D^2u)=0$ is false in this case}. Similarly, $v_2=-u_2$ gives a counterexample for solutions to $\mathcal{M}^-_{\lambda,\Lambda}(D^2v_2)\leq0$ in $\R^d$. 

\end{cex}
{ \begin{rem} 
The paper \cite{CLeoni} studies a similar but different problem with respect to \cite{BC}, namely, 
the Liouville property 
for viscosity supersolutions to $\mathcal{M}^-_{\lambda,\Lambda}(D^2u)=0$ in $\R^d$ and for 
subsolutions to $\mathcal{M}^+_{\lambda,\Lambda}(D^2u)=0$. 
They prove it under the  less restrictive condition $d\leq \frac{\Lambda}{\lambda}+1$ \cite[Theorem 3.2]{CLeoni}. Note, however, that their theorem cannot be applied to general uniformly elliptic operators via the inequalities \eqref{US}, whereas the results in \cite{BC} allow such application.
 The next example shows that also the condition in \cite{CLeoni} is optimal.
 \end {rem} 
}
 \begin{cex} [From 
 \cite{CLeoni}] { Set $\alpha:=\frac{\lambda}{\Lambda}(d-1)+1$ and consider  the function}
 \[
u_3(x)=\begin{cases}
-\frac18[\alpha(\alpha-2)|x|^4-2(\alpha^2-4)|x|^2+\alpha(\alpha+2)]&\text{ if }|x|<1\ ,\\
-\frac{1}{|x|^{\alpha-2}}&\text{ if }|x|\geq1\ ,
\end{cases}
\]
which is a 
 classical solution to $\mathcal{M}^+_{\lambda,\Lambda}(D^2u_3)\leq0$ in $\R^{d}$. { Moreover it is bounded and not constant if $\alpha>2$, which is equivalent to $d>\Lambda/\lambda+1$}. Similarly, $v_3=-u_3$ yields a counterexample for the corresponding property for the minimal operator.
 \end{cex}

\subsubsection{{ The Heisenberg case: sublaplacians.}}

Liouville's theorem for classical harmonic functions on the Heisenberg group is a consequence of the Harnack inequality, see, e.g., \cite[Theorem 8.5.1]{BLU}, { or mean-value formulas \cite[Theorem 3.1]{ICDCsub}}. However, the Liouville property for classical subsolutions (resp., supersolutions) bounded from above (resp., below) of
\[
-\Delta_{\He^d}u=0\quad \text{ in }{ \mathbb{H}^d\simeq} \R^{2d+1}
\]
is false { for all dimensions $d$, as the next example shows. 
} 
 We recall that  $Q:=2d+2$ is the homogeneous dimension of $\mathbb{H}^d$ and $\rho(x)$ is the homogeneous norm defined in \eqref{homoH}. 
\begin{cex}
The function
\[
\tilde u(x)=\begin{cases}
\frac18[Q(Q-2)\rho^4-2(Q^2-4)\rho^2+Q(Q+2)]&\text{ if }\rho\leq1\ ,\\
\frac{1}{\rho^{Q-2}}&\text{ if }\rho\geq1\ ,
\end{cases}
\]
is a bounded classical supersolution to $-\Delta_{\He^d}u=0$ in $\R^{2d+1}$. Indeed, when $\rho\leq 1$ one  applies Lemma \ref{hesH} to  the radial function
\[
\tilde u=\tilde f(\rho)=\frac18[Q(Q-2)\rho^4-2(Q^2-4)\rho^2+Q(Q+2)],
\]
and gets, 
at points where $|D_{\mathbb{H}^d}\rho|\neq0$, 
\begin{multline*}
-\Delta_{\mathbb{H}^d} \tilde u=-\mathrm{Tr}(D^2_{\mathbb{H}^d} \tilde f(\rho))\\
=-\frac{Q-2}{2\rho^2}|x_H|^2\left\{[3Q\rho^2-(Q+2)]+3[Q\rho^2-(Q+2)]+(2d-2)[Q\rho^2-(Q+2)]\right\}\\
=-\frac{Q-2}{2\rho^2}|x_H|^2Q(\rho^2-1)(Q+2)\geq 0 \,,
\end{multline*}
due to the fact that $\rho^2\leq1$ and $Q\geq4$. At points where $|x_H|=0$ all the eigenvalues of $D^2_{\mathbb{H}^d} \tilde f(\rho)$ vanish and hence $\tilde u$ is a solution of the sub-Laplace equation. When $\rho\geq1$, instead, one observes that $\tilde u$ is  the fundamental solution of the sub-Laplace equations on the Heisenberg group found by G.B. Folland \cite{Folland}.
Similarly, $v=-\tilde u$ gives a bounded subsolution to $-\Delta_{\He^d}u=0$ in $\R^{2d+1}$. 
\end{cex}

\subsubsection{{ The Heisenberg case: fully nonlinear operators.}}
{ The Liouville property in this context was first studied by Cutr\`i and Tchou  \cite
{CTchou}}
for viscosity supersolutions bounded from below of  $\mathcal{M}^-_{\lambda,\Lambda}((D^2_{\He^d}u)^*)$ $=0$ in $\R^{2d+1}$ and for 
subsolutions bounded from above to $\mathcal{M}^+_{\lambda,\Lambda}((D^2_{\He^d}u)^*)=0$. 
{ Their  Theorem 5.2 states that such functions are constant provided that $Q\leq \frac{\Lambda}{\lambda}+1$. The next is a new example showing that this condition is sharp.}



 \begin{cex}\label{cexalphatilde}
Set $\tilde \alpha:=\frac{\lambda}{\Lambda}(Q-1)+1$. 
We show that for $\tilde\alpha>2$, { i.e., $Q> \frac{\Lambda}{\lambda}+1$,}
 \[
u_4(x)=\begin{cases}
-\frac18[\tilde \alpha(\tilde \alpha-2)\rho^4-2(\tilde \alpha^2-4)\rho^2+\tilde \alpha(\tilde \alpha+2)]&\text{ if }\rho<1\ ,\\
-\frac{1}{\rho^{\tilde \alpha-2}}&\text{ if }\rho\geq1\ ,
\end{cases}
\]
is a bounded from above classical solution to $\mathcal{M}^+_{\lambda,\Lambda}((D^2_{\He^d}u_4)^*)\leq0$ in $\R^{2d+1}$ and it is not constant. Indeed, denote by $u_4(x)=f_4(\rho)$. For $\rho<1$ we have 
\[
f'_4(\rho)=-\frac{\tilde \alpha-2}{2}\rho[\tilde \alpha\rho^2-(\tilde \alpha+2)]\ ,
\]
and
\[
f''_4(\rho)=-\frac{\tilde \alpha-2}{2}[3\rho^2\tilde\alpha-(\tilde\alpha+2)]
\]
Recalling that $|D_{\He^d}\rho|^2=|x_H|^2/\rho^2$, by Lemma \ref{hesH} the eigenvalues of the radial function $f_4(\rho)$
\[
e_1=|D_{\He^d}\rho|^2f''_4(\rho)=-\frac{\tilde \alpha-2}{2\rho^2}|x_H|^2[3\rho^2\tilde\alpha-(\tilde \alpha+2)] ,
\]
\[
e_2=3|D_{\He^d}\rho|^2\frac{f'_4(\rho)}{\rho}=-3\frac{\tilde \alpha-2}{2\rho^2}|x_H|^2[\tilde \alpha\rho^2-(\tilde \alpha+2)] ,
\]
which are both simple, and
\[
e_3=|D_{\He^d}\rho|^2\frac{f'_4(\rho)}{\rho}=-\frac{\tilde \alpha-2}{2\rho^2}|x_H|^2[\tilde \alpha\rho^2-(\tilde \alpha+2)] ,
\]
which has multiplicity $2d-2$. 
Observe that, when $\rho<1$ and $\tilde \alpha>2$, the eigenvalues $e_2,e_3$ are always positive. Moreover, 
for $\rho^2\leq\frac{\tilde \alpha+2}{3\tilde\alpha}<1$, also $e_1$ is positive and hence $\mathcal{M}^+_{\lambda,\Lambda}((D^2_{\He^d}u_4)^*)\leq0$. When $1>\rho^2>\frac{\tilde \alpha+2}{3\tilde\alpha}$, $e_1<0$, and hence by Lemma \ref{hesH}
\begin{multline*}
\mathcal{M}^+_{\lambda,\Lambda}((D^2_{\He^d}u_4)^*)=\Lambda\frac{\tilde \alpha-2}{2\rho^2}|x_H|^2[3\rho^2\tilde\alpha-(\tilde \alpha+2)]\\+\lambda\left\{\frac{\tilde \alpha-2}{2\rho^2}|x_H|^2[\tilde \alpha\rho^2-(\tilde \alpha+2)](2d-2)+3\frac{\tilde \alpha-2}{2\rho^2}|x_H|^2[\tilde \alpha\rho^2-(\tilde \alpha+2)]\right\}\\
=\frac{\tilde \alpha-2}{2\rho^2}|x_H|^2\left\{\lambda[\tilde \alpha\rho^2-(\tilde \alpha+2)](2d-2)+3\tilde \alpha\rho^2-3(\tilde \alpha+2)]+\Lambda[3\rho^2\tilde\alpha-(\tilde \alpha+2)]\right\}\\
=\frac{\tilde \alpha-2}{2\rho^2}|x_H|^2\left\{\tilde \alpha\rho^2[(2d+1)\lambda+3\Lambda] -\lambda(2d+1)(\tilde \alpha+2)-\Lambda(\tilde\alpha+2)\right\}\\
=\frac{\tilde \alpha-2}{2\rho^2}|x_H|^2\left\{\tilde \alpha\rho^2[(Q-1)\lambda+3\Lambda] -[\lambda(Q-1)+\Lambda](\tilde \alpha+2)\right\}\\
=\frac{\tilde \alpha-2}{2\rho^2}|x_H|^2\left\{[\lambda(Q-1)+\Lambda](-\tilde\alpha-2+\tilde\alpha\rho^2)+2\Lambda\tilde\alpha\rho^2\right\}\\
\leq \frac{\tilde \alpha-2}{2\rho^2}|x_H|^2\left\{-2[\lambda(Q-1)+\Lambda]+2\Lambda\tilde\alpha\right\}
=\frac{\tilde \alpha-2}{2\rho^2}|x_H|^2\left\{-2\lambda(Q-1)+2\Lambda(\tilde\alpha-1)\right\}=0\ ,
\end{multline*}
where the last equality is true in view of $\tilde{\alpha}-1=\frac{\lambda}{\Lambda}(Q-1)$. When $\rho>1$ we have
\[
f'_4(\rho)=-(2-\tilde\alpha)\rho^{1-\tilde{\alpha}} ,
\]
\[
f''_4(\rho)=-(2-\tilde\alpha)(1-\tilde\alpha)\rho^{-\tilde \alpha} ,
\]
and the eigenvalues are
\[
e_4=|D_{\He^d}\rho|^2f''_4(\rho)=-\frac{|x_H|^2(2-\tilde \alpha)(1-\tilde \alpha)}{\rho^{\tilde \alpha+2}} ,
\]
\[
e_5=3|D_{\He^d}\rho|^2\frac{f'_4(\rho)}{\rho}=-3\frac{|x_H|^2(2-\tilde \alpha)}{\rho^{\tilde \alpha+2}} ,
\]
and
\[
e_6=|D_{\He^d}\rho|^2\frac{f'_4(\rho)}{\rho}=-\frac{|x_H|^2(2-\tilde \alpha)}{\rho^{\tilde \alpha+2}}
\]
with multiplicity $2d-2$. Therefore, for $\rho\geq1$, we have
\[
\mathcal{M}^+_{\lambda,\Lambda}((D^2_{\He^d}u_4)^*)=\frac{|x_H|^2(2-\tilde \alpha)}{\rho^{\tilde \alpha+2}}\left[\Lambda(1-\tilde\alpha)+\lambda(Q-1)\right]=0\ ,
\]
Similarly, $v_4=-u_4$ yields a counterexample for the corresponding property of the miminal operator.
\end{cex}
\smallskip
{ Next we discuss the optimality of our Theorem \ref{Cor1H} in the case without lower order terms, i.e., $H_i=H_s=0$. Then the condition \eqref{condcor1} becomes $Q\leq\frac{\lambda}{\Lambda}+1$, which is not satisfied in  the Heisenberg group because $Q\geq4$. This  is consistent with the failure of the Liouville property for sub- and supersolutions of the Heisenberg Laplacian observed before. 
The next 
example shows that the Liouville property fails also for supersolutions bounded from below of $\mathcal{M}^+_{\lambda,\Lambda}((D^2_{\He^d}u)^*)=0$ and subsolutions bounded from above of  $\mathcal{M}^-_{\lambda,\Lambda}((D^2_{\He^d}u)^*)=0$, for all $\lambda, \Lambda$, and $d$. Therefore, we conclude that the presence of suitable lower order terms in Theorem \ref{Cor1H} is necessary 
for the Liouville property. 

}
%
\begin{cex} { Set $\tilde \beta:=\frac{\Lambda}{\lambda}(Q-1)+1$. Note that $\tilde \beta > 2$ because $Q\geq 4>\frac{\lambda}{\Lambda}+1$.}
In the same way as in Counterexample \ref{cexalphatilde}, one can verify that the function
\[
u_5(x)=\begin{cases}
\frac18[\tilde \beta(\tilde \beta-2)\rho^4-2(\tilde \beta^2-4)\rho^2+\tilde \beta(\tilde \beta+2)]&\text{ if }\rho<1\ ,\\
\frac{1}{\rho^{\tilde \beta-2}}&\text{ if }\rho\geq1\ .
\end{cases}
\]
is a bounded, nonconstant,  classical supersolution to $\mathcal{M}^+_{\lambda,\Lambda}((D^2_{\He^d}u_5)^*)=0$. 
\end{cex}

{
\subsection{Equations with Heisenberg Hessian and Euclidean gradient.}\label{sec;euc-grad}

Here we consider equations of the form \eqref{1L-}, i.e., \eqref{non-h-grad}, namely, 
\begin{equation}\label{non-h-grad-2}
G(x,u, Du, (D_{
\He^d}^2u)^*)=0\text{ in }\R^d\ ,
\end{equation}
with $G : \R^{2d+1}\times\R\times\R^{2d+1}\times\mathcal S_{2d} \to\R$, so they involve the Heisenberg Hessian $D_{
\He^d}^2u$ and the 
 Euclidean gradient $Du$. As at the end of Section \ref{fully} we assume $G$ is uniformly subelliptic and its first order part is bounded from below by a concave Hamiltonian $H_i$ or from above by a convex one $H_s$. 
Then Corollary \ref{eu-grad} gives one of the Liouville properties if we find a suitable super- or subsolution out of a big ball. The next result gives an explicit sufficient condition 
saying that the vector fields $b^\al$ in the drift part of $H_i$ and $H_s$ point toward the origin for $|x|$ large enough, as in the Ornstein-Uhlenbeck operators. It involves  the homogeneous norm $\rho$ of the Heisenberg group defined by \eqref{homoH}.%
\begin{cor}\label{cor_euc1}
Assume that the operator $G$ { satisfies \eqref{US}, 
where
$\X=\{X_1,...,X_{2d}\}$}  are the Heisenberg vector fields,  
and \eqref{blip} and  \eqref{cass} hold.  
Suppose there exist $\g_1,\dots, \g_{2d+1}\in\R$ with $\min_i\g_i=\g_o>0$ and such that 
\begin{equation}\label{OUtype}
\sup_\al b^\al(x)\cdot D\rho(x) \leq -\sum_{i=1}^{2d+1}\g_ix_i\partial_i\rho + o\left(\frac1{\rho^3} \right)\qquad\text{ as }\rho\to \infty.
\end{equation}
\noindent {\upshape(A)} Assume 
 \eqref{G>Hp} and $u\in\USC(\R^{2d+1})$ is a subsolution of \eqref{non-h-grad-2} satisfying  { \eqref{grow+}.}
 If either $c^\alpha(x)\equiv0$ or $u\geq0$, then $u$ is constant.

\noindent {\upshape(B)} Assume 
 \eqref{G<Hp} and $v\in\LSC(\R^{2d+1})$ is a supersolution of \eqref{non-h-grad-2} satisfying  { \eqref{grow-}.}
 If either $c^\alpha(x)\equiv0$ or $v\leq0$, then $v$ is constant.
\end{cor}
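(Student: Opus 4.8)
The plan is to deduce both statements from Corollary \ref{eu-grad}, which reduces each Liouville property to the construction of a Lyapunov function out of a big ball: for part (A) a supersolution $w$ of \eqref{pucci1} with $w\to+\infty$, and for part (B) a subsolution $W$ of \eqref{pucci2} with $W\to-\infty$. Exactly as in Theorem \ref{Cor1H} I would take $w=\log\rho$ and $W=-\log\rho$, for which $w\to+\infty$, $W\to-\infty$ as $|x|\to\infty$ is immediate since $\rho\to\infty$. The only genuinely new point with respect to Theorem \ref{Cor1H} is that the first-order term of $H_i$ (resp.\ $H_s$) is now tested against the \emph{Euclidean} gradient $Dw$ rather than the horizontal one, so the drift contribution must be recomputed.

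First I would recall from the proof of Theorem \ref{Cor1H} that, by Lemma \ref{hesH},
\begin{equation*}
\mathcal{M}^-_{\lambda,\Lambda}((D^2_{\He^d}w)^*)=(\lambda-\Lambda(2d+1))\frac{|x_H|^2}{\rho^4}\qquad\text{for }|x_H|\neq0,
\end{equation*}
and that this quantity is $O(1/\rho^2)$, because $|x_H|^2\leq\rho^2$; by duality the same holds with the opposite sign for $\mathcal{M}^+_{\lambda,\Lambda}((D^2_{\He^d}W)^*)$. The crucial observation is that this second-order term \emph{vanishes} at infinity, so the supersolution test is decided by the drift alone. Next I would compute the Euclidean gradient of $\rho$: differentiating $\rho^4=|x_H|^4+x_{2d+1}^2$ gives
\begin{equation*}
\partial_i\rho=\frac{|x_H|^2 x_i}{\rho^3}\ \ (1\leq i\leq 2d),\qquad \partial_{2d+1}\rho=\frac{x_{2d+1}}{2\rho^3},
\end{equation*}
so $Dw=D\rho/\rho$ and, using $\gamma_i\geq\gamma_o>0$,
\begin{equation*}
\sum_{i=1}^{2d+1}\gamma_i x_i\partial_i\rho=\frac{1}{\rho^3}\Big(|x_H|^2\sum_{i=1}^{2d}\gamma_i x_i^2+\tfrac12\gamma_{2d+1}x_{2d+1}^2\Big)\geq\frac{\gamma_o}{2\rho^3}\big(|x_H|^4+x_{2d+1}^2\big)=\frac{\gamma_o}{2}\,\rho.
\end{equation*}

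I would stress that this lower bound holds at every point with $\rho>0$, in particular also where $|x_H|=0$. Combining it with the structural assumption \eqref{OUtype}, and using $c^\alpha\geq0$ together with $w=\log\rho\geq0$ for $\rho\geq1$, I would estimate
\begin{equation*}
\inf_{\alpha}\{c^\alpha\log\rho-b^\alpha\cdot Dw\}\geq-\frac{1}{\rho}\sup_\alpha b^\alpha\cdot D\rho\geq\frac{1}{\rho}\sum_{i=1}^{2d+1}\gamma_i x_i\partial_i\rho-o\!\left(\frac{1}{\rho^4}\right)\geq\frac{\gamma_o}{2}-o(1).
\end{equation*}
Adding the $O(1/\rho^2)$ Pucci term shows that the left-hand side of \eqref{pucci1} evaluated at $w$ is bounded below by $\gamma_o/2-o(1)>0$ for $\rho$ large, which is precisely property (iii). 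The supersolution case is symmetric: for $W=-\log\rho\leq0$ one has $DW=-D\rho/\rho$ and $H_s\leq\frac{1}{\rho}\sup_\alpha b^\alpha\cdot D\rho\leq-\gamma_o/2+o(1)$, so the $\mathcal{M}^+$ test is bounded above by $-\gamma_o/2+o(1)<0$ for $\rho$ large, giving property (iii'). Parts (A) and (B) then follow from Corollary \ref{eu-grad}.

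I expect the main obstacle to be exactly the step that Theorem \ref{Cor1H} could sidestep: verifying that the drift yields a \emph{uniform} positive constant $\gamma_o/2$ rather than a quantity degenerating at infinity. In the horizontal-gradient setting of Theorem \ref{Cor1H} the drift acts through $D_{\He^d}\rho=\eta/\rho^3$, which vanishes on the center $\{|x_H|=0\}$ together with the horizontal Hessian, so $w$ is trivially a supersolution there. Here the gradient is Euclidean and the drift still acts through $\partial_{2d+1}\rho\neq0$ on $\{|x_H|=0\}$, where the horizontal Hessian collapses entirely; one must therefore confirm—as the displayed bound on $\sum_i\gamma_i x_i\partial_i\rho$ does—that the positive lower bound survives on that degenerate set and not only where $|x_H|\neq0$. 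The remaining work is the routine bookkeeping of the $o(1/\rho^3)$ remainder in \eqref{OUtype} against the vanishing second-order term.
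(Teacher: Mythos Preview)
Your proof is correct and follows essentially the same approach as the paper: both apply Corollary \ref{eu-grad} with the Lyapunov function $w=\log\rho$, compute $\mathcal{M}^-_{\lambda,\Lambda}((D^2_{\He^d}w)^*)$ and $D\rho$ identically, and then show the drift term dominates. The only difference is in the final estimate: you extract the clean pointwise bound $\sum_i\gamma_i x_i\partial_i\rho\geq\frac{\gamma_o}{2}\rho$, so the drift contributes at least the constant $\gamma_o/2$ and the $O(1/\rho^2)$ Pucci term is absorbed as $o(1)$; the paper instead keeps both terms over the common denominator $\rho^4$ and checks positivity by a short case analysis on whether $|x_H|^2$ exceeds $C_1/\gamma_o$. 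Your organization is slightly more direct and also makes transparent, as you note, that the argument works uniformly on the center $\{|x_H|=0\}$.
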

\begin{proof}
We check that $w=\log\rho$ is a supersolution. Let $C_1:= \Lambda(2d+1)-\lambda > 0$. As in the proof of Theorem \ref{Cor1H}, $w$ is a supersolution at all points where
\begin{equation}\label{tobecheck}
-C_1\frac{|x_H|^2}{\rho^4}+\inf_{\alpha\in A}\left\{c^\alpha(x)\log\rho-b^\alpha(x)\cdot\frac{D\rho}{\rho}\right\}\geq0\ .
\end{equation}
Since $D\rho = (2|x_H|^2 x_H, x_{2d+1})/(2\rho^3)$, we get  from \eqref{OUtype}  that the left hand side is larger than 
\begin{multline*}
-C_1\frac{|x_H|^2}{\rho^4}+\frac 1{2\rho^4}\left( 2\sum_{i=1}^{2d}\g_ix_i^2|x_H|^2+ \g_{2d+1}x_{2d+1}^2 + o(1)\right)\geq
\\
\frac 1{\rho^4}\left( |x_H|^2 (\g_o|x_H|^2 - C_1) +\frac{\g_o}2 x_{2d+1}^2+ o(1)\right) \geq 0 \,,
\end{multline*}
for $\rho$ large enough, by taking either $|x_H|^2 > C_1/\g_o$, or $|x_H|^2 \leq C_1/\g_o$ and $x_{2d+1}^2>2C_1^2/\g_o^2$. 
\end{proof}
The last result is based on a condition of positivity of the coefficients $c^\al$ at infinity similar to 
 Example \ref{schro}.
\begin{cor}\label{cor_euc2}
In the assumptions of Corollary \ref{cor_euc1} replace \eqref{OUtype} with 
\begin{equation}\label{c_a}
\liminf_{|x|\to\infty} \inf_{\alpha\in A}c^\alpha(x)
\log\rho(x)>0 
 \,,
\end{equation}
and either
\begin{equation}\label{sign}
\limsup_{|x|\to\infty} \sup_{\alpha\in A}b^\alpha(x)\cdot D\rho(x)\leq 0 \,,
\end{equation}
or 
\begin{equation}\label{order}
 \sup_{\alpha\in A}|b^\alpha(x)|  = o(\rho) \qquad\text{ as }\rho\to \infty.
\end{equation}
Then the  conclusions of Corollary \ref{cor_euc1} hold true.
\end{cor}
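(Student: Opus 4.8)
The plan is to follow verbatim the scheme of Corollary \ref{cor_euc1}: for part (A) it suffices to verify that $w=\log\rho$ is a viscosity supersolution of \eqref{pucci1} (with the Euclidean gradient entering the Hamiltonian) outside a large ball, so that property (iii) holds and Corollary \ref{eu-grad} applies; part (B) is symmetric, taking $W=-\log\rho$ as a subsolution of \eqref{pucci2} and using property (iii'). As computed in Corollary \ref{cor_euc1}, at points with $|x_H|\neq0$ the function $w$ is a supersolution exactly where \eqref{tobecheck} holds, i.e.
\[
-C_1\frac{|x_H|^2}{\rho^4}+\inf_{\alpha\in A}\left\{c^\alpha(x)\log\rho-b^\alpha(x)\cdot\frac{D\rho}{\rho}\right\}\geq0,\qquad C_1:=\Lambda(2d+1)-\lambda>0,
\]
while at points with $|x_H|=0$ all eigenvalues of $(D^2_{\He^d}w)^*$ vanish and the inequality is trivial. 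Thus the entire proof reduces to checking \eqref{tobecheck} for $\rho$ large.

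First I would split the infimum as
\[
\inf_{\alpha\in A}\left\{c^\alpha\log\rho-b^\alpha\cdot\frac{D\rho}{\rho}\right\}\geq \inf_{\alpha\in A}c^\alpha(x)\log\rho-\sup_{\alpha\in A}b^\alpha(x)\cdot\frac{D\rho}{\rho},
\]
and record the two easy ingredients: by \eqref{c_a} there are $c_0>0$ and $R_1>0$ with $\inf_\alpha c^\alpha(x)\log\rho(x)\geq c_0$ for $|x|\geq R_1$, while the remainder term on the left of \eqref{tobecheck} is negligible, since $|x_H|^2\leq\rho^2$ forces $C_1|x_H|^2/\rho^4\leq C_1/\rho^2\to0$.

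The heart of the matter is the drift term $\sup_\alpha b^\alpha\cdot D\rho/\rho$, and the single technical fact I would establish first is that the Euclidean gradient of the Heisenberg gauge stays bounded at infinity: from $D\rho=(2|x_H|^2x_H,\,x_{2d+1})/(2\rho^3)$ together with $|x_H|^2\leq\rho^2$ and $x_{2d+1}^2\leq\rho^4$ one gets $|D\rho|^2\leq 1+1/(4\rho^2)$, hence $|D\rho|$ is bounded for $\rho\geq1$. With this in hand, under \eqref{order} I estimate $|b^\alpha\cdot D\rho|/\rho\leq|b^\alpha|\,|D\rho|/\rho$ and combine $\sup_\alpha|b^\alpha|=o(\rho)$ with the boundedness of $|D\rho|$ to obtain $\sup_\alpha b^\alpha\cdot D\rho/\rho=o(1)$; alternatively, under \eqref{sign} the conclusion is even more immediate, since $\sup_\alpha b^\alpha\cdot D\rho\leq\varepsilon$ for $|x|$ large yields $\sup_\alpha b^\alpha\cdot D\rho/\rho\leq\varepsilon/\rho\to0$. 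In both cases the drift contribution to \eqref{tobecheck} is $o(1)$.

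Putting the three estimates together, the left-hand side of \eqref{tobecheck} is bounded below by $c_0-o(1)-C_1/\rho^2\geq c_0/2>0$ for $\rho$ sufficiently large; thus $w=\log\rho$ is a supersolution outside a big ball, property (iii) holds, and Corollary \ref{eu-grad} (as invoked in Corollary \ref{cor_euc1}) gives that $u$ is constant. Part (B) is obtained in the same way by checking that $W=-\log\rho$ is a subsolution of \eqref{pucci2} for large $\rho$, the $\mathcal{M}^+$ computation being dual. I expect the only real obstacle to be the boundedness of $|D\rho|$, namely controlling the anisotropy of the Heisenberg norm; once that is secured, either sign/size hypothesis on $b^\alpha$ makes the drift term negligible against the fixed positive lower bound $c_0$ furnished by \eqref{c_a}.
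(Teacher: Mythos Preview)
Your proof is correct and follows essentially the same approach as the paper's own argument: both reduce to verifying \eqref{tobecheck} for large $\rho$, observe that $|x_H|^2/\rho^4\leq 1/\rho^2=o(1)$ and that $D\rho=O(1)$, and then use either \eqref{sign} or \eqref{order} to show the drift term is bounded below by $-o(1)$, leaving the positive contribution from \eqref{c_a} to dominate. Your explicit computation of the bound $|D\rho|^2\leq 1+1/(4\rho^2)$ merely makes precise what the paper states as $D\rho=O(1)$.
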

\begin{proof}
We check again the inequality \eqref{tobecheck}. Condition \eqref{sign} implies that 
$-b^\alpha(x)\cdot
{D\rho}/{\rho}\geq o(1)$ as $\rho\to \infty$ uniformly in $\al$, and the same occurs under \eqref{order} because $D\rho = O(1)$. Also ${|x_H|^2}/{\rho^4}\leq 1/\rho^2 =o(1)$. Then condition \eqref{c_a} implies  \eqref{tobecheck} for $|x|$ large enough.
\end{proof}
\begin{rem}
Corollary \ref{cor_euc1} generalizes to fully nonlinear equations the Liouville properties for linear Ornstein-Uhlenbeck operators with Heisenberg sub-laplacian proved in \cite{MMT}. 

The condition \eqref{c_a} in Corollary \ref{cor_euc2} obviously holds if $c^\alpha(x)\geq c_o>0$ for $|x|$ large enough, and in such case the condition \eqref{order} can be weakened to $\sup_{\alpha\in A}|b^\alpha(x)|  = o(\rho\log \rho)$.
\end{rem}
}


%
\end{document}